\documentclass[11pt]{article}
  \usepackage{graphicx} 

\title{Semidiscrete Modeling of Dislocation-Disclination Systems:
Finite Element Formulation }

\author{Edoardo Fabbrini\footnote{SACRA, Graduate School of Science, Kyoto University, Japan \url{fabbrini.edoardo.2w@kyoto-u.ac.jp}}
, Pierluigi Cesana\footnote{Institute of Mathematics for Industry, Kyushu University, Japan, \url{cesana@math.kyushu-u.ac.jp}
}, 
 Andr\'{e}s A. Le\'{o}n Baldelli\footnote{
Sorbonne Université, CNRS,
Institut Jean Le Rond d’Alembert, France, \url{leon.baldelli@cnrs.fr}
}, 
Marco Morandotti\footnote{
Dipartimento di Scienze Matematiche ``G.~L.~Lagrange'', Politecnico di Torino, Italy, \url{marco.morandotti@polito.it}
}}

\date{\today}

\usepackage{amssymb}
\usepackage{amsmath}

\usepackage[utf8]{inputenc}
\usepackage[lined]{algorithm2e}
\usepackage{minitoc}
\usepackage{mathtools}
\usepackage{stmaryrd} 
\usepackage{fancyvrb}
\usepackage{color}
\usepackage{multicol}
\usepackage{datetime}
\usepackage[left=3cm,right=3cm]{geometry}
\usepackage{soul}
\usepackage{cancel}
\usepackage[english]{babel}
\usepackage{booktabs}
\usepackage{mathrsfs}
\usepackage{amsthm}
\usepackage{comment}
\usepackage{chngcntr}
\usepackage{bm}
\usepackage{enumitem} 

\usepackage[percent]{overpic}

\counterwithin{equation}{section}

\usepackage[pdftitle={},pdfsubject={},pdfkeywords={},pdfproducer={Latex with hyperref},pdfcreator={pdflatex}]{hyperref}

\theoremstyle{plain} 
\newtheorem{theorem}{Theorem}[section]
\newtheorem{proposition}[theorem]{Proposition}
\newtheorem{lemma}[theorem]{Lemma}

\theoremstyle{definition} 
\newtheorem{definition}[theorem]{Definition}

\theoremstyle{remark} 
\newtheorem{remark}[theorem]{Remark}

\renewenvironment{proof}[1][Proof]{\noindent\textbf{#1.} }{\hfill$\square$\vspace{5pt}}

\usepackage{latexsym}
\usepackage{boxedminipage}
\usepackage{listings}
\usepackage{minitoc}
\usepackage{ifpdf}
\usepackage{subcaption}
\usepackage{array}

\usepackage[T1]{fontenc}      
\usepackage{graphicx}
\graphicspath{ {.}, {./images} }
\usepackage{xcolor}
\usepackage{booktabs}
\usepackage{geometry}         
\usepackage{float}            
\usepackage{enumitem}         
\usepackage{caption}         
\usepackage{calligra}

\usepackage{tikz}
\usetikzlibrary{calc}

\newcommand{\edgin}{\mathcal{E}_{\text{in}}}
\newcommand{\edgbdouter}{\mathcal{E}_{\text{out}}}
\newcommand{\edgbdinner}[1]{\mathcal{E}_{\text{core},#1}}

\newcommand{\IP}{\alpha}

\newcommand{\cof}{\operatorname{cof}}

\newcommand{\Omegaeps}{\Omega_{\varepsilon}}
\newcommand{\Div}{\operatorname{Div}}
\newcommand{\de}{\bm{\delta}}

\newcommand{\CC}{\mathbb{C}}

\newcommand{\cG}{\mathcal{G}}
\newcommand{\cI}{\mathcal{I}}
\newcommand{\inc}{\operatorname{inc}}
\newcommand{\scrC}{\mathscr{C}}

\makeatletter
\def\@splitop#1#2\@nil{$\mathscr{#1}\!\!$\calligra#2\,\,}
\newcommand*\DeclareCursiveOperator[2]
{
\newcommand#1{\mathop{\mbox{\@splitop#2\@nil}}\nolimits}
}
\DeclareCursiveOperator{\Xv}{C}
\makeatother

\newcommand{\ud}{\mathrm{d}}

\newcommand{\sym}{\mathrm{sym}}
\newcommand{\R}{\mathbb{R}}

\newcommand{\Huno}{\mathcal H^1}

\newcommand{\average}{{\mathchoice {\kern1ex\vcenter{\hrule
height.4pt width 8pt depth0pt}
\kern-11pt} {\kern1ex\vcenter{\hrule height.4pt width 4.3pt
depth0pt} \kern-7pt} {} {} }}
\newcommand{\ave}{\average\int} 

\newcommand{\ep}{\varepsilon}
\newcommand{\ce}{\ep}

\begin{document}

\maketitle

\begin{abstract}
We present a numerical formulation for the resolution of finite systems of interacting edge dislocations and wedge disclinations. 
The approach is based on a finite element discretization of a fourth-order elliptic boundary value problem arising from the mechanical equilibrium equations of plane-strain linear elasticity in the presence of kinematic incompatibilities.
The numerical implementation follows a continuous interior penalty discontinuous Galerkin framework and relies on the solution of a finite set of local cell problems.
Our  method is validated against analytical benchmarks and used to explore several 
non-trivial dislocation-disclination configurations, illustrating how translational and rotational incompatibilities shape the stress state of the body.

\end{abstract}

\vskip5pt
\noindent
\textsc{Keywords}: Finite Element Method, Discontinuous Galerkin Method, Wedge Disclinations, Edge Dislocations, Linearized Elasticity, Airy Stress Function.
\vskip5pt
\noindent
\textsc{2020 AMS subject classification:}  
49J45,   
49J10,   
74B15,  
65N30.

\tableofcontents

\section{Introduction}

The modeling of systems of dislocations and disclinations is essential for understanding and predicting the large scale behavior of metal alloys, elastic crystals, and crystalline membranes. This task is challenging due to the wide range of length scales involved and the complex nature and magnitude of the associated mechanical fields.
Dislocations are \textit{translational} topological defects, first introduced by Volterra \cite{V07}. In continuum theories \cite{Z97, RV92}, they arise as distributional solutions of mechanical equilibrium with incompatible kinematics and are associated with infinite elastic energy.
By contrast, disclinations, also introduced by Volterra, represent \textit{rotational} mismatches at the level of the crystal lattice.
They are not strictly topological defects since they have finite energy, although they still produce singular mechanical stresses.

Interactions between disclinations and dislocations are well documented experimentally and have a strong influence on material behavior. 
In crystal plasticity, for example, their combined effects are important  in the modeling of kinking  \cite{HAGIHARA10, I19}, grain boundaries \cite{Gertsman89,LI72}, crack-tip plasticity
\cite{Xu95,Xu97}. 
Due to their high energy, which scales with the square of the radius of their domain, isolated disclinations are rarely found in nature. 
Instead, they tend to occur in pairs with opposite mismatch angles, a configuration that is energetically favorable due to the screening of mechanical stresses \cite{RRK2018, CPL14}. 
In graphene, configurations such as disclination dipoles, Stone--Wales defects, and other complex configurations significantly influence electrical, magnetic, and mechanical properties, as well as chemical reactivity \cite{KVV2016}.
Recent progress in the modeling of disclinations and dislocations has been driven by the development of the 
g.disclination theory, which provides a unified continuum framework capable of incorporating phase transformations, grain boundaries, and a wide range of plasticity mechanisms. For further details, we refer the reader to \cite{acharya15, ZA2018, ZHANG18, FRESSENGEAS2020104092}.

The aim of this paper is to introduce a numerical method for solving finite systems of edge dislocations and wedge disclinations
within a unified variational framework. 
Our approach leverages the variational structure of the underlying mechanical model and is based on the solution of cell problems for the Airy stress potential. The numerical implementation employs the finite element method in combination with a continuous interior penalty discontinuous Galerkin formulation  for a fourth-order elliptic problem.
From a mechanical standpoint, we assume linearized kinematics in isotropic elasticity under the planar strain regime. Within this framework, we model Volterra disclinations and dislocations as solutions to the mechanical equilibrium problem, which accounts for kinematic incompatibilities in traction-free conditions. In continuum mechanics, both dislocations and disclinations are characterized by divergent stresses. Singularities in the stress field are regularized using the so-called \textit{core radius} method.
This method involves introducing a length scale, denoted as $\varepsilon>0$, representing the size of the region around the defect where stresses become singular, and the continuum model breaks down. 
This establishes the \textit{semidiscrete} nature of the model.
This approach has been successfully analyzed for dislocations by Cermelli and Leoni \cite{CermelliLeoni06}, building upon the work of Bethuel, Brezis, and Hélein on Ginzburg-Landau vortices (see also \cite{GarroniLeoniPonsiglione10,DeLucaGarroniPonsiglione12,BlassMorandotti17,Ginster19_2,AlicandroDeLucaPalombaroPonsiglione2025}), and later extended to disclinations \cite{Cesana2024a}. Experimental estimations of these regions suggest that their size is of the order of a few nanometers, as shown in \cite{Peierls40}.

The main contributions of this work are as follows.

\begin{enumerate}[leftmargin=12pt, topsep=0pt, itemsep=0pt, partopsep=0pt, parsep=0pt]
    \item Our method simultaneously handles any finite combination of Volterra edge dislocations and wedge disclinations in general domains. While there exists a large body of research on phenomenological models for microplasticity in both infinitesimal and finite elasticity, much of the existing literature focuses on empirical descriptions of asymmetries and mismatches via phase variables, based on the   phase field approach \cite{RODNEY03, WANG01, WANG10, cai06, Kundin11}. These studies primarily aim to model statistically stored dislocations and defects, which are relevant to understanding plastic deformation processes at low to moderate strains, particularly in materials undergoing steady-state plastic flow 
\cite{FleckMullerAshbyHutchinson94}.
In contrast, our approach focuses on the fundamental question of modeling interacting systems of Volterra dislocations and disclinations, which represent idealized, isolated defects purely defined by geometry.
We explore a variety of configurations where these defects interact in non-trivial ways, investigating constructive/destructive interactions.
 
    \item  Our method is based on a variational principle, providing a full variational characterization for the mechanical system with incompatible elasticity, formulated in terms of the Airy stress function. This function serves as the global unknown of the system, simultaneously accounting for traction-free boundary conditions and kinematic incompatibility. This approach offers advantages over the standard implementations of the Airy stress function method in the engineering literature, which typically involves a two-step process: first solving for an infinite-domain solution, then adding  terms that enforce zero-stress boundary conditions \cite{Becker21,Zdzisaw99}.
    
Specifically, we solve for the Airy potential, which reduces a tensorial problem (involving stresses and strains) to a scalar problem, albeit of fourth order. However, a complication arises when formulating the boundary conditions for the Airy potential. Traction-free conditions on the stress at the boundary translate, formally, into tangential conditions on the hessian of the Airy potential \cite{Michell, Sadd25}. 
We rely on a characterization developed in \cite{Cesana2024a} to show that these non-classical tangential conditions are equivalent to parametric Dirichlet-type conditions for the Airy potential. The full variational characterization of the mathematical model, presented in \cite{CFM2025} for smooth domains, is generalized here for a class of Lipschitz-type domains, which are well-suited for finite element problems.

\item Our numerical formulation is highly general. Once the geometry of the problem is defined, it proceeds via the computation of cell formulas. After the cell formulas are obtained, any configuration of edge dislocations or wedge disclinations can be efficiently generated at the postprocessing stage, without the need to rerun the finite element problem. This approach leads to significant time savings. 
More precisely, our approach involves solving \(3N\) cell formula problems, where \(N\) is the total number of defects (in the core radius regularization this refers to the total number of such cores). Once the corresponding fundamental solutions are determined, any configuration of defects for a given geometry, represented by \(K\) disclinations and \(J\) dislocations (under the condition \(N = K + J\)), can be computed on the fly during postprocessing. 

 \end{enumerate}

\smallskip

The outline of the paper is as follows. After presenting the mathematical background of the semidiscrete modeling approach for systems of disclinations and dislocations, in Section~\ref{2511271518} we present our main analytical result, Theorem~\ref{2502282100}, which establishes the well posedness of mechanical equilibrium problems in the presence of kinematic incompatibility.  
The theorem is formulated both in the standard stress/strain framework and in terms of the Airy stress potential. Additionally, it provides a characterization of the equilibrium solution in terms of cell formulas for the Airy potential, which serves as the basis for the numerical implementation.
In Section \ref{sec:202509161701}, we describe our numerical implementation, which   leverages the well-posedness theorem, with particular emphasis on the characterization of the mechanical equilibrium problem via cell formulas.
In Section \ref{2512112300}, we present simulations of relevant configurations involving disclination-dislocation interacting systems. Additionally, we provide supplementary materials, including video collections of parametric simulations.
  
\section{Preliminaries and analytical model}\label{2511271518}

We denote by $\Omega$ a 
domain in $\mathbb{R}^2$ with Lipschitz boundary (the precise requirements on $\Omega$ will be specified in Definition~\ref{def_Omega} below). 
We consider a collection of $J\in\mathbb{N}$ edge dislocations placed at $x^{(1)},\ldots,x^{(J)}\in\Omega$ and characterized by their Burgers vectors $b^{1},\ldots,b^{J}\in\mathbb{R}^2$, and a collection of $K\in\mathbb{N}$ wedge disclinations placed at $y^{(1)},\ldots,y^{(K)}\in\Omega$ and characterized by their Frank angles $s^{1},\ldots,s^{K}\in\mathbb{R}$. 
We require that $\{\xi^{(i)}\}_{i=1}^{N}=\{x^{(j)}\}_{j=1}^{J}\cup\{y^{(k)}\}_{k=1}^{K}$ results in a set of $N=J+K$ distinct points (so that no two different defects are occupying the same position, \emph{i.e.}, $\xi^{(i_1)}\neq \xi^{(i_2)}$ if $i_1\neq i_2$).

We work within the framework of linearized elasticity 
in planar strain regime \cite{ciarlet97}. The constitutive law relating the strain tensor $\epsilon\in \R^{2\times2}_{\sym}$ to the stress tensor $\sigma\in\R^{2\times2}_{\sym}$ is given by $\sigma = \mathbb{C}\epsilon$, where $\mathbb{C}$ denotes the Cauchy elasticity tensor. For isotropic materials, this relation can be inverted and takes the form
\begin{equation}\label{strain_stress}
\epsilon_{11}=\frac{1+\nu}{E}\Big((1-\nu)\sigma_{11}-\nu\sigma_{22}\Big)\,,\quad \epsilon_{12}=\frac{1+\nu}{E}\sigma_{12}\,,\quad \epsilon_{22}=\frac{1+\nu}{E}\Big((1-\nu)\sigma_{22}-\nu\sigma_{11}\Big)\,.
\end{equation}
We describe incompatible kinematics by means of the incompatibility operator $\mathrm{inc}\colon H^k(\Omega;\mathbb{R}^{2\times 2}_{\sym})\to H^{k-2}(\Omega)$ (see 
\cite{Angoshtari2016, Yavari2013, Yavari2020,  vG2017, Acharya99a, ACGK})
defined by
\begin{equation}\label{inc_operator}
\mathrm{inc}\,\epsilon \coloneqq \partial^2_{x_2^2}\epsilon_{11}-2\partial^2_{x_1x_2}\epsilon_{12}+\partial^2_{x_1^2}\epsilon_{22}\,,
\end{equation}
where differentiation is intended in the weak sense.
In the presence of dislocations and disclinations, the condition of kinematic incompatibility reads \cite{SN88,vaGoethemDupret2012,Cesana2024a,CFM2025}
\begin{equation}\label{inc_eps}
\mathrm{inc}\,\epsilon=\sum_{j=1}^J |b^{j}|\partial_{\frac{(b^{j})^\perp}{|b^{j}|}} \de_{x^{(j)}}-\sum_{k=1}^K s^{k}\de_{y^{(k)}} \eqqcolon \zeta\,,
\end{equation}
where $\de_{\xi^{(i)}}$ is the Dirac delta supported at $\xi^{(i)}\in\Omega$ and the operator $\partial_{(b^j)^\perp/|b^j|}$ is the derivative in the direction of the (unit) vector $(b^j)^\perp/|b^j|$ and is understood in the sense of distributions.
The measure $\zeta$ is a measure of the presence of defects in the body; in the absence of dislocations and disclinations (\emph{i.e.}, when 
$\zeta=0$), the body is in the condition of geometric compatibility and $\mathrm{inc}\,\epsilon=0$ in $\Omega$.

When looking for equilibrium configurations of an elastic  body, 
one seeks to solve the  minimization problem
\begin{equation}\label{min_prob_eps}
\min \big\{\mathcal{W}(\epsilon;\Omega):\epsilon\in L^2(\Omega;\mathbb{R}^{2\times 2}_{\sym}), \text{ \eqref{inc_eps} is satisfied}\big\},
\end{equation}
where $\mathcal{W}(\cdot;\Omega)\colon L^2(\Omega;\mathbb{R}^{2\times 2}_{\sym})\to\mathbb{R}$ is the mechanical energy functional
\begin{equation}\label{energy_E}
\mathcal{W}(\epsilon;\Omega)\coloneqq \frac{1}{2}\int_\Omega \sigma:\epsilon\,\ud x = \frac12\int_\Omega \CC\epsilon:\epsilon\,\ud x.
\end{equation}
The associated Euler-Lagrange equation is
\begin{equation}\label{EL_eq_strain}
\begin{cases}
\Div\sigma=0 & \text{in $\Omega$,} \\
\sigma\, n =0 & \text{on $\partial\Omega$,} \\
\inc \epsilon = \zeta 
& \text{in $\Omega$,}
\end{cases}
\end{equation}
where $\Div$ is the row-wise divergence operator, and the traction-free  condition emerges as the \emph{natural boundary condition}.
The Airy stress function method introduces a scalar potential 
$v\colon\Omega\to\mathbb{R}$ such that the stress can be written as (see \cite[section 5.7]{ciarlet97}) 
\begin{subequations}\label{eq_Airyoperatorgen}
\begin{equation}\label{eq_Airyoperator1}
\sigma=\sigma[v]=\mathcal{A}(v),
\end{equation}
where $\mathcal{A}\colon H^{k+2}(\Omega)\to H^{k}(\Omega;\mathbb{R}^{2\times2}_{\sym})$ is defined by
\begin{equation}\label{eq_Airyoperator}
\mathcal{A}(v)\coloneqq \cof(\nabla^2 v)=\begin{pmatrix}
v_{x_2x_2} & -v_{x_1x_2} \\
-v_{x_2x_1} & v_{x_1x_1}
\end{pmatrix}
\end{equation}
\end{subequations}
and the cofactor operator $\cof\colon\R^{2\times2}\to\R^{2\times2}$ acts on $2\times2$ matrices in the following way:
$$\begin{pmatrix}
m_{11} & m_{12} \\
m_{21} & m_{22}
\end{pmatrix}=m\mapsto \cof(m)=
\begin{pmatrix}
m_{22} & -m_{21} \\
-m_{12} & m_{11}
\end{pmatrix}.$$
A straightforward computation shows that, if $\sigma$ and $v$ are related by \eqref{eq_Airyoperatorgen}, then $\Div\sigma[v]=0$ is automatically satisfied; moreover, owing to \eqref{strain_stress} and \eqref{eq_Airyoperatorgen}, the incompatibility operator \eqref{inc_operator} reads $\mathrm{inc}\,\epsilon = \frac{1-\nu^2}{E}\Delta^2 v$,
so that the kinematic incompatibility condition \eqref{inc_eps} becomes, for the Airy stress function $v\in H^{k+2}(\Omega)$,
\begin{equation}\label{inc_v}
\frac{1-\nu^2}{E}\Delta^2 v= \zeta. 
\end{equation}
By \eqref{strain_stress} and \eqref{eq_Airyoperatorgen}, the mechanical energy functional $\mathcal{W}$ in \eqref{energy_E} depending on the strain $\epsilon$ can be rewritten as a functional $\mathcal{G}(\cdot;\Omega) \colon H^{2}(\Omega)\to \mathbb{R}$ defined in the following way:
\begin{equation}\label{eq_energyv}
\cG(v;\Omega) \coloneqq  \frac12\frac{1+\nu}{E} \int_\Omega \big[|\nabla^2 v|^2-\nu(\Delta v)^2\big]\,\ud x.
\end{equation}
The aim, now, is to see equation \eqref{inc_v} as the Euler--Lagrange equation of a suitable functional, that keeps the presence of the defects into account through the measure $\zeta$.
As proposed in \cite{Cesana2024a}, the functional $\mathcal{I}(\cdot;\Omega) \colon H^2(\Omega)\to\mathbb{R}$ defined by
\begin{equation}\label{eq_functional_I}
\mathcal{I}(v;\Omega) \coloneqq \mathcal{G}(v;\Omega)+\langle\zeta,v\rangle,
\end{equation}
where $\langle \zeta,v\rangle$ is the duality pairing between the measure $\zeta$ and the function $v$, yields \eqref{inc_v} as its Euler--Lagrange equation, together with the boundary condition $\nabla^2 v\,t=0$ on $\partial \Omega$, which is the translation, in the Airy variable, of the condition $\sigma\,n=0$ on $\partial\Omega$, as a simple computation using \eqref{eq_Airyoperatorgen} shows.

We adopt the \emph{core-radius approach} (see, \emph{e.g.}, \cite{CermelliLeoni06}) and we formulate the problem by removing small disks around each defect $\xi^{(i)}$\,, ($i=1,\ldots,N$) and by transferring the effects of $\de_{\xi^{(i)}}$ to the boundaries of these disks.
More precisely, let $\varepsilon>0$ be such that the open disks $B_\varepsilon^i\coloneqq B_\varepsilon(\xi^{(i)})$, for $i=1,\ldots,N$, are all disjoint and well contained in~$\Omega$ (this is possible since the points $\xi^{(i)}$'s are all distinct), and define
\begin{equation}\label{eq_Omega_eps}
\Omega_\varepsilon\coloneqq\Omega\setminus\bigg(\bigcup_{i=1}^N\overline{B}_\varepsilon^i\bigg),\quad\text{with}\quad \partial \Omega_\varepsilon = \partial \Omega\cup\bigg(\bigcup_{i=1}^N \partial B_\varepsilon^i\bigg),
\end{equation}
and the $\varepsilon$-regularized functional
\begin{equation}\label{eq_energyperforated}
\cI_{\varepsilon}(v;\Omega_{\varepsilon})\coloneqq \,
\cG(v;\Omega_{\varepsilon})    +\sum_{j=1}^J\frac{1}{2\pi\ce}\int_{\partial B_{\ce}^j} \langle\nabla v, \Pi(b^{j})\rangle\,\ud\Huno
   +\sum_{k=1}^K \frac{s^{k}}{2\pi\ce}\int_{\partial B_\ce^k} v\,\ud\Huno\,,
 \end{equation}
where the boundary integrals are the approximation of the corresponding atomic measures in $\zeta$ (see \eqref{inc_eps}).
We seek to minimize the functional in \eqref{eq_energyperforated} in the class
\begin{equation}\label{eq_competitorsperforatedfloat}
\scrC(\Omega_\varepsilon):= 
\big\{v\in H^2_0(\Omega): \text{$v=a^i$ in $B_\ce^i$\,, for some affine functions $a^i$, $i=1,\dots,N$}\}\,.
\end{equation}
The functional in \eqref{eq_energyperforated} was introduced in \cite{CFM2025} to model an elastic body with kinematic incompatibilities of translational type (measured by the Burgers vectors \( b^{j} \)) and rotational type (measured by the Frank angles \( s^{k} \)). Owing to the adopted regularization strategy, the corresponding equilibrium configurations exhibit smooth stress fields.
Formally, as \( \varepsilon \to 0 \), the functional \( \cI_{\varepsilon}(\cdot; \Omega_\varepsilon) \) converges to the limit functional $\cI(\cdot;\Omega)$ in \eqref{eq_functional_I}. The variational convergence of this family has been partially investigated in \cite{Cesana2024a}.

The class  functions \eqref{eq_competitorsperforatedfloat} provides the appropriate framework for identifying mechanical equilibrium solutions. It was first introduced in \cite{Cesana2024a} and encodes unknown affine boundary conditions on the internal boundaries.  
It was proved in \cite[Proposition~A.2]{Cesana2024a} that these boundary conditions characterize the fact that \begin{equation}\label{eq_hesstang0}
\nabla^2 v\,t=0\qquad \text{on $\partial\Omega_\varepsilon$\,,} 
\end{equation}
provided that both $v$ and $\partial\Omega$ is of class $C^2$.
Owing to \eqref{eq_Airyoperatorgen}, this condition ensures zero normal stress on $\partial\Omega_\varepsilon$\,, and was first recognized by Michell \cite{Michell}.
In what follows, we extend the variational theory developed in \cite{CFM2025} for smooth domains to a broader class, defined below, that includes polygonal domains.

\begin{table}[h!]
    \centering
    \begin{tabular}{ll|ll}
        \toprule
        \textbf{Symbol} & \textbf{Description} & \textbf{Symbol} & \textbf{Description} \\
        \midrule      
        $\Omega$ & Reference configuration  & $\Omega_{\varepsilon}$  & Reference configuration  \\
                &  (simply connected) &   &  (non-simply connected) \\
        $\varepsilon>0$ & Core radius & $x = (x_1, x_2)$ & Point in $\Omega,\Omega_{\varepsilon}$ \\
        $\de_x$ & Dirac delta function at $x$ & $v$ & Airy stress function \\
        $\epsilon$; $\epsilon_{ij}$ & Strain tensor; components & $\sigma$; $\sigma_{ij}$ & Stress tensor; components \\
        $s,s^{k} \in \mathbb{R}$ & Frank angle & $b,b^{j} \in \mathbb{R}^2$ & Burgers vector \\
        $y^{(k)}$ & Position of disclination &  $x^{(j)}$ & Position of dislocation\\
        $x^\perp\coloneqq (-x_2,x_1)$ & $\pi/2$-rotation of a vector &  $\Pi x\coloneqq -x^\perp$ & $-\pi/2$-rotation of a vector\\
        $n=(n_1,n_2)$ & outward normal to a domain & $t=n^\perp$ & unit tangent to a domain\\ 
        $K \in \mathbb{N}$ & Number of wedge disclinations & $J \in \mathbb{N}$ & Number of edge dislocations \\
        $E>0$ & Young modulus & $\nu \in (-1,\frac{1}{2})$ & Poisson ratio \\
        \bottomrule
    \end{tabular}
    \caption{Relevant material and geometric parameters.}
    \label{table:2024}
\end{table}

\begin{table}[h!]
    \centering
    \begin{tabular}{llc}
        \toprule
        \textbf{Symbol} & \textbf{Description}  \\
        \midrule
         $C(\Omega)$ & Space of continuous functions defined in $\Omega$  \\    $C^k(\Omega)$ & Space of $k$-time differentiable functions defined in $\Omega$, $k \ge 1$ \\     
         $L^p(\Omega)$ & Space of $p$-th power Lebesgue integrable functions defined in $\Omega$, $p \ge 1$ \\
         $W^{k, p}(\Omega)$ & $L^p$-maps having $k$-th order derivatives with components in $L^p(\Omega)$, $k \in \mathbb{N}$ \\
         $H^{2}(\Omega)$ & Special notation for $W^{2, 2}(\Omega)$ \\
        $H_0^{2}(\Omega)$ & Set of functions $f \in H^2(\Omega)$ such that $f = \partial_n f = 0$ on $\partial \Omega$  \\
        $H^{-2}$ & Dual space of $H^2_0(\Omega)$ \\
        $\nabla f$, $\nabla^2 f$ & Gradient and Hessian matrix of $f$, respectively \\
        $\Delta f$, $\Delta^2 f$ &  Laplacian and bilaplacian of $f$, respectively ($\Delta^2 f:=\Delta\Delta f$) \\
         \bottomrule
    \end{tabular}
    \caption{Notation for function spaces and operators.}
    \label{table:202505131228}
\end{table}

\begin{definition}[Piecewise-$C^2$ Lipschitz domains]\label{def_Omega}
Let $\Omega\subset\mathbb{R}^2$ be a bounded, open, and simply connected set with Lipschitz boundary $\Gamma \coloneqq \partial\Omega$. 
We say that $\Omega$ is a \emph{piecewise-$C^2$  Lipschitz domain}  if $\Gamma$ is globally Lipschitz and made of the union of $L\geq1$ 
portions $\{\Gamma^\ell\}_{\ell=1}^{L}$ of class $C^2$.
The endpoints of each portion $\Gamma^\ell$ of the boundary are named in the following way:
$$\Gamma^\ell=[P^\ell,P^{\ell+1}],\qquad\text{with}\quad P^{L+1}=P^1\,.$$
\end{definition}

\begin{theorem}\label{2502282100}
Let $\Omega$ be as in Definition~\ref{def_Omega} and let $\Omega_\varepsilon$ be defined as in \eqref{eq_Omega_eps}.
Let $\cI$ and $\scrC(\Omega_\varepsilon)$ be defined as in \eqref{eq_energyperforated} and \eqref{eq_competitorsperforatedfloat}, respectively.
\begin{enumerate}
\item[{$\mathrm{(I)}$}]
Let $\hat{v}\in\scrC(\Omega_\varepsilon)$. 
Then the following three statements are equivalent.
\begin{enumerate}
\item[{$\mathrm{(1)}$}] The function $\hat{v}$ is the unique solution to the problem
\begin{equation}\label{2502131930}
\min\big\{\cI_{\varepsilon}(v;\Omega_{\varepsilon}):v\in \scrC(\Omega_\varepsilon)\big\}.
\end{equation}
\item[{$\mathrm{(2)}$}] The function $\hat{v}$ satisfies the Euler--Lagrange equation for \eqref{2502131930} in weak form, namely
\begin{equation}\label{2408291139}
0=\frac{1+\nu}{E}
\int_{\Omega_{\varepsilon}}\!\! \big[\nabla^2 \hat{v}:\nabla^2\phi-\nu\Delta \hat{v}\,\Delta\phi\big]\,\ud x +\sum_{j=1}^J \ave_{\partial B_{\ce}^j} \!\langle\nabla \phi, \Pi(b^j)\rangle\,\ud\Huno +\sum_{k=1}^K \ave_{\partial B_\ce^k} \! s^k\phi\,\ud\Huno\,
\end{equation}
for every $\phi\in \scrC(\Omega_\varepsilon)$.
\item[{$\mathrm{(3)}$}] The function $\hat{v}$ is given by
\begin{equation}\label{eq:202411261130}
\hat{v}(x) = - \frac{E}{1-\nu^2} \langle \mathcal{M}^{-1} \kappa(x),\Phi\rangle, \qquad\text{for $x\in\Omega_\varepsilon$\,,}
\end{equation}
where $\Phi\coloneqq (s^1, s^1 \xi_1^{(1)} + b_2^1, s^1 \xi_2^{(1)} - b_1^1, 
\ldots, s^N, s^{N} \xi_1^{(N)} + b_2^N, s^{N} \xi_2^{(N)} - b_1^N)$ and
where $\kappa \coloneqq (\kappa^1_0, \kappa^1_1, \kappa^1_2, \ldots, \kappa^N_0, \kappa^N_1, \kappa^N_2)\colon\Omega_\varepsilon\to\mathbb{R}^{3N}$
is the vector whose elements are obtained from the following cell  
problems: for every $i=1,\ldots,N$, the functions $\kappa^i_0$ and $\kappa^i_r$ ($r=1,2$) are the unique solutions to, respectively,
\begin{equation}\label{eq:2410171611}
\!\!\!\! \begin{cases}
\Delta^2 \kappa_0 = 0 & \text{in $\Omega_{\varepsilon}$,} \\
\kappa_0 = \partial_n \kappa_0 = 0 & \text{on $\partial\Omega$,} \\
\kappa_0 = 1 & \text{on $\partial B_{\varepsilon}^i$\,,} \\
\partial_n \kappa_0 = 0 & \text{on $\partial B_{\varepsilon}^i$\,,} \\
\kappa_0 = \partial_n \kappa_0 = 0 & \text{on $\partial B_{\varepsilon}^I$, if $I \ne i$,} 
\end{cases}
\quad\text{and}\quad
\begin{cases}
\Delta^2 \kappa_r = 0 & \text{in $\Omega_{\varepsilon}$,} \\
\kappa_r = \partial_n \kappa_r = 0 & \text{on $\partial\Omega$,} \\
\kappa_r = x_r & \text{on $\partial B_{\varepsilon}^i$\,,} \\
\partial_n \kappa_r = n_r & \text{on $\partial B_{\varepsilon}^i$\,,} \\
\kappa_r = \partial_n \kappa_r = 0 & \text{on $\partial B_{\varepsilon}^I$, if $I \ne i$.}
\end{cases}	
\end{equation}
In~\eqref{eq:202411261130}, $\mathcal{M}$ is the $(3N)\times(3N)$ block matrix whose blocks $\mathcal{M}_{ij}$ ($i,j=1,\ldots,N$) are the $3\times3$ matrices defined as (see \cite[formula~(50)]{CFM2025})
\[
(\mathcal{M}_{ij})_{rs}\coloneqq \int_{\Omega_\varepsilon} \nabla^2\kappa^i_r(x):\nabla^2\kappa^j_s(x)\,\mathrm{d}x.
\]
\end{enumerate}

\item[{$\mathrm{(II)}$}] Letting $\hat{v}$ as in $\mathrm{(I)}$ and $\hat{\sigma} \coloneqq \sigma[\hat{v}] = \cof(\nabla^2 \hat{v})$, define $\hat{\epsilon}\in L^2(\Omega_\varepsilon;\mathbb{R}^{2\times 2}_{\sym})$ by
\begin{equation}\label{eq_cofattore}
\hat\epsilon \coloneqq \CC^{-1}\hat\sigma=\CC^{-1}\cof(\nabla^2 \hat{v}). 
\end{equation}
If $\hat{v}\in H^4(\Omega_\varepsilon)$, then the strain $\hat\epsilon\in H^2(\Omega_\varepsilon;\R^{2\times2}_{\sym})$  is the unique
solution to
\begin{equation}\label{2502131927}
\begin{cases}
\Div \CC\epsilon = 0 & \text{in $\Omega_{\varepsilon}$\,,}\\		
\CC\epsilon\, n = 0 & \text{on $\partial\Omega_{\varepsilon}$\,,} \\
\inc \epsilon = 0 & \text{in $\Omega_{\varepsilon}$\,,} \\
\displaystyle \int_{\partial B_\varepsilon^i} (\epsilon_{rq,c}-\epsilon_{qc,r})\,\ud x_q =s^i & \text{for $i = 1,\ldots, N$,}\\[3mm]
\displaystyle \int_{\partial B_\varepsilon^i} [\epsilon_{rc}-x_q(\epsilon_{rc,q}-\epsilon_{cq,r})]\,\ud x_c = b^i_r & \text{for $i = 1,\ldots, N$ and $r=1,2$,}
\end{cases}
\end{equation}
where the boundaries $\partial B_\varepsilon^i$ are oriented counter-clockwise.
\item[{$\mathrm{(III)}$}] Suppose that the unique solution $\hat\epsilon$ to \eqref{2502131927} belongs to $C^\infty(\Omega_\varepsilon;\R^{2\times2}_{\sym})$. Then, there exists 
$\hat{v}$   satisfying \eqref{eq_cofattore}
such that it also verifies $\mathrm{(1)}$, $\mathrm{(2)}$, and $\mathrm{(3)}$ in  $\mathrm{(I)}$.
\end{enumerate}
\end{theorem}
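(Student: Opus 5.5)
The plan follows the structure of \cite{CFM2025} and checks at each step that only Lipschitz regularity of $\partial\Omega$ is needed, plus piecewise-$C^2$ regularity where traces of second derivatives appear.

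\textbf{Part (I).} First, $\scrC(\Omega_\varepsilon)$ is a closed subspace of $H^2_0(\Omega)$: it is linear, and the constraint ``$v$ affine on $B_\varepsilon^i$'' is preserved under $H^2$ convergence, since $\nabla^2 v=0$ on $B_\varepsilon^i$ is a closed condition. Next, $\cG(\cdot;\Omega_\varepsilon)$ is coercive on $\scrC(\Omega_\varepsilon)$. Pointwise, $|\nabla^2 v|^2-\nu(\Delta v)^2\ge (1-2\max(\nu,0))|\nabla^2 v|^2$ because $(\Delta v)^2\le 2|\nabla^2 v|^2$, and the constant is positive for $\nu\in(-1,\tfrac12)$. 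Since $\nabla^2 v=0$ on the disks, we get $\cG(v;\Omega_\varepsilon)\gtrsim\|\nabla^2 v\|^2_{L^2(\Omega)}$, and this controls $\|v\|_{H^2(\Omega)}$ by Poincar\'e in $H^2_0(\Omega)$, which only needs $\Omega$ bounded. The boundary terms in \eqref{eq_energyperforated} are bounded linear functionals on $H^2$ by the trace theorem. The Lax--Milgram/Riesz theorem then gives existence and uniqueness of the minimizer. Because the functional is a strictly convex quadratic, it also gives (1)$\Leftrightarrow$(2): the weak Euler--Lagrange equation \eqref{2408291139}, obtained by differentiating along $\phi\in\scrC(\Omega_\varepsilon)$, characterizes the unique minimizer. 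No smoothness of $\partial\Omega$ is used in this step.

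For (2)$\Leftrightarrow$(3), I parametrize $\scrC(\Omega_\varepsilon)$ as a $3N$-dimensional affine family over a biharmonic complement. Any $v\in\scrC(\Omega_\varepsilon)$ with $v|_{B^i_\varepsilon}=c^i_0+c^i_1x_1+c^i_2x_2$ splits as $v=w+\sum_{i,r}c^i_r\kappa^i_r$ with $w\in H^2_0(\Omega_\varepsilon)$. Here I use the cell solutions of \eqref{eq:2410171611}, which exist and are unique by Lax--Milgram in $H^2_0(\Omega_\varepsilon)$ after lifting the Dirichlet data with a cutoff near $\partial B^i_\varepsilon$; this lifting is possible because the data are traces of affine functions. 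Testing \eqref{2408291139} with $\phi\in H^2_0(\Omega_\varepsilon)$ shows that $\hat v$ is weakly biharmonic, so $w=0$ and $\hat v=\sum c^i_r\kappa^i_r$. Testing with $\phi=\kappa^j_s$ then gives a $3N\times3N$ linear system. The cross terms reduce to the $\nabla^2:\nabla^2$ form of $\mathcal M$, because the $\nu$-term $\int[\nabla^2 u:\nabla^2\phi-\Delta u\Delta\phi]$ is a null Lagrangian. It integrates to boundary terms that vanish on $\partial\Omega$ and cancel on the circles for functions that are affine inside the disks, since $\hat v$ and $\phi$ are globally $H^2$. The right-hand side is computed from the averages $\ave_{\partial B^j_\varepsilon}$ of affine functions, which produces exactly $\Phi$ up to the factor $\tfrac{E}{1-\nu^2}$ after absorbing $\tfrac{1+\nu}{E}$ and the $\nu$-correction. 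I also need $\mathcal M$ to be positive definite, since it is the Gram matrix of linearly independent functions $\nabla^2\kappa^i_r$. The key obstacle in (I) is this null-Lagrangian cancellation and the bookkeeping producing $\Phi$; I would check it by integrating by parts on each annulus separately.

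\textbf{Parts (II) and (III).} For (II), with $\hat v\in H^4(\Omega_\varepsilon)$, I set $\hat\sigma=\cof\nabla^2\hat v$. Then $\Div\hat\sigma=0$ identically, and $\inc\hat\epsilon=\tfrac{1-\nu^2}{E}\Delta^2\hat v=0$ from the strong form of \eqref{2408291139}. Integrating \eqref{2408291139} by parts yields the natural conditions on $\partial B^i_\varepsilon$, and these should be equivalent to the two line-integral conditions on $\hat\epsilon$ (Frank angle and Burgers vector), which I would compute via $\epsilon_{rq,c}-\epsilon_{qc,r}$ in terms of third derivatives of $\hat v$ as in \cite{CFM2025}. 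For traction-free conditions I need $\nabla^2\hat v\,t=0$. On the circles this holds because $\hat v$ is affine there and $C^1$ across. On $\partial\Omega$, on each $C^2$ piece $\Gamma^\ell$, $\hat v=\partial_n\hat v=0$ gives $\nabla\hat v=0$ along $\Gamma^\ell$, hence its tangential derivative $\nabla^2\hat v\,t=0$; corners are a null set. This is where the piecewise-$C^2$ hypothesis replaces the global $C^2$ of \cite[Proposition~A.2]{Cesana2024a}. Uniqueness follows because the difference of two solutions is compatible with zero Burgers and Frank data, hence a symmetric gradient on the multiply connected $\Omega_\varepsilon$. It is therefore a traction-free elastic field, which must be $0$ by Korn's inequality and the energy identity.

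For (III), given smooth $\hat\epsilon$, I would set $\hat\sigma=\CC\hat\epsilon$, which is divergence free. On the simply connected pieces, or by a Poincar\'e lemma using zero net force and moment of the traction on each $\partial B^i_\varepsilon$, I would build a single-valued Airy potential $v$ with $\cof\nabla^2v=\hat\sigma$. The remaining affine freedom is fixed by $v=\partial_nv=0$ on $\partial\Omega$, which is possible since $\nabla^2v\,t=0$ forces $\nabla v$ constant on the connected boundary $\Gamma$. The same argument makes $v$ affine on each circle, so it extends affinely into each $B^i_\varepsilon$ and lies in $\scrC(\Omega_\varepsilon)$. Reversing the integration by parts of (II) then gives \eqref{2408291139}, and (I) yields (1) and (3). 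I expect the single-valuedness of the potential around the holes to be the delicate step.
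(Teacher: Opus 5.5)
Your proposal is correct. For Parts (II) and (III) you follow the paper's route: integration by parts to the strong form, \cite{CFM2025} for the line-integral conditions, a single-valued Airy potential, and the reduction of $\sigma n=0$ to $\nabla^2 v\,t=0$. Your corner argument in (III) is Proposition~\ref{prop202510111410} in substance, because the Geymonat--Krasucki quantity $(\partial_t g_0)n-g_1t$ is just $\Pi\nabla v|_\Gamma$. For $v\in C^2(\overline A)$, plain continuity of $\nabla v$ gives what the paper extracts from the $H^{1/2}(\Gamma)$ condition; the paper's version would also cover $v$ that is merely $H^2$ near the corners.

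The genuine difference is in (I), (2)$\Leftrightarrow$(3). The paper imports \cite[Proposition~3.2]{CFM2025} and re-proves only the ingredient that used $C^2$ regularity of $\partial\Omega$: the symmetry of the Monge--Amp\`ere trilinear form (Lemma~\ref{2909251140}, via approximation and integration by parts on $\Omega_\varepsilon$). You instead derive the cell formula directly from the splitting $\scrC(\Omega_\varepsilon)=H^2_0(\Omega_\varepsilon)\oplus\mathrm{span}\{\kappa^i_r\}$. You dispose of the $\nu$-term by noting that $\nabla^2u:\nabla^2\phi-\Delta u\,\Delta\phi=-\cof(\nabla^2u):\nabla^2\phi$ vanishes on the disks for $u,\phi\in\scrC(\Omega_\varepsilon)$. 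Its integral over $\Omega_\varepsilon$ therefore equals the integral over $\Omega$, which is zero on $H^2_0(\Omega)$. This argument is self-contained, needs no regularity of $\partial\Omega$, and makes Lemma~\ref{2909251140} unnecessary for the theorem.

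Two small points need tightening.
\begin{itemize}
\item Justify that vanishing by density of $C^\infty_c(\Omega)$ in $H^2_0(\Omega)$ together with continuity of the bilinear form. Do not justify it by integrating by parts on annuli: for $H^2$ functions, $\nabla^2u$ has no trace on the circles.
\item Prove the independence of the $\nabla^2\kappa^i_r$, on which invertibility of $\mathcal M$ rests. If $\sum c^i_r\nabla^2\kappa^i_r=0$, then $\sum c^i_r\kappa^i_r$ is affine on the connected set $\Omega_\varepsilon$ with zero value and gradient on $\partial\Omega$, hence identically zero. Its affine trace on each circle then forces all $c^i_r=0$.
\end{itemize}
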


\begin{proof}
(I) We start by observing that the space $\scrC(\Omega_\varepsilon)$ is (weakly) closed in $H^2_0(\Omega)$, so that the minimum problem in \eqref{2502131930} admits a solution by applying the direct method of the calculus of variations; moreover, the solution is unique owing to the strict convexity of the functional~$\cI_{\varepsilon}$. 
Notice that the boundedness of minimizing sequences in $H^2_0(\Omega)$ is obtained thanks to Friedrichs's Inequality, which holds for Lipschitz domains.
Thus, implication (1)$\Rightarrow$(2) follows from imposing that the first variation $\delta \cI_{\varepsilon}(\hat{v};\Omega_\varepsilon)[\phi]$ vanish for every $\phi\in\scrC(\Omega_\varepsilon)$.
Viceversa, implication (2)$\Rightarrow$(1) follows from the strict convexity of the functional.

The proof that (1)$\Rightarrow$(3) can be found in \cite[Proposition~3.2]{CFM2025}. 
There, to conclude, a symmetry property of the trilinear form \eqref{sec:24241126922} 
was used (see \cite[Lemma~C.2]{CFM2025}), which relied on the fact that $\partial\Omega$ was of class $C^2$. In the present case, where $\partial\Omega$ is only Lipschitz continuous, the same symmetry property holds true and it is proved in Lemma~\ref{2909251140} (see formula \eqref{eq_C3}). 
On the contrary, the proof that (3)$\Rightarrow$(1) can be borrowed from \cite[Proposition~3.2]{CFM2025} as this part remains valid also for $\Omega$ as in Definition~\ref{def_Omega}.

\smallskip
 (II) To prove the second part of the theorem, one can integrate by parts the weak form of the Euler--Lagrange equations \eqref{2408291139}, thanks to the $H^4$--regularity of $\hat{v}$.
Since $\partial B_\varepsilon^i$ is a circle for every $i=1,\ldots,N$ and $\hat{\epsilon}\in H^2(\Omega_\varepsilon;\R^{2\times2}_{\sym})$ implies that the integrands are (at least) $L^1$ (see, \emph{e.g.}, \cite[Theorem~6.10]{DNPV2012}), the boundary integrals in \eqref{2502131927} are well defined and are obtained by appropriately choosing affine test functions $\phi\in\scrC(\Omega_\varepsilon)$ in \eqref{2408291139}.
More precisely, we refer to \cite[Proposition~3.3]{CFM2025}, which ensures that \eqref{2408291139} becomes \begin{equation}\label{eq_strong_EL_v_min_BC}
\begin{cases}
\displaystyle \frac{1-\nu^2}{E}\Delta^2 \hat{v}=0 & \text{in $L^2(\Omega_\varepsilon)$,}\\[3mm]
\displaystyle \frac{1-\nu^2}{E} \int_{\partial B_\varepsilon^i} \partial_n(\Delta \hat{v})\,\ud\Huno=s^i & \text{for every $i=1,\ldots,N$,}\\[3mm]
\displaystyle \frac{1-\nu^2}{E} \int_{\partial B_\varepsilon^i} \bigg(x_1\partial_t(\Delta \hat{v})-x_2\partial_n(\Delta \hat{v})+\frac{(\nabla^2 \hat{v}\,t)_1}{1-\nu}\bigg)\,\ud\Huno=b^i_1 & \text{for every $i=1,\ldots,N$,}\\[3mm]
\displaystyle \frac{1-\nu^2}{E} \int_{\partial B_\varepsilon^i} \bigg(x_1\partial_n(\Delta \hat{v})+x_2\partial_t(\Delta \hat{v})+\frac{(\nabla^2 \hat{v}\,t)_2}{1-\nu}\bigg)\,\ud\Huno=b^i_2 & \text{for every $i=1,\ldots,N$,}
\end{cases}
 \end{equation}
for every $i=1,\ldots,N$, where $n$ is the outer unit normal to $\Omega_\varepsilon$.  
The first equation in \eqref{eq_strong_EL_v_min_BC} corresponds to the third equation in \eqref{2502131927} thanks to the Airy transformation \eqref{eq_Airyoperatorgen}.
Then, thanks to \cite[Proposition~2.2]{CFM2025} the last three equations in \eqref{eq_strong_EL_v_min_BC} correspond to the last two equations in \eqref{2502131927}.
Since $\hat{v}\in H^4(\Omega_\varepsilon)\cap \scrC(\Omega_\varepsilon)$, again \eqref{eq_Airyoperatorgen}
ensures that the second equation in \eqref{2502131927} holds true; finally, the first equation in \eqref{2502131927} is trivially satisfied thanks to the fact that $\Div(\CC\hat{\epsilon})=\Div \hat{\sigma}=\Div(\cof(\nabla^2\hat{v}))\equiv0$.

\smallskip (III) 
The proof mirrors that of \cite[Theorem~3.3]{CFM2025}, using that the boundary condition $\sigma\,n = \CC \epsilon\,n = 0$ on $\partial\Omega_\varepsilon$ reduces to $\nabla^2 v\,t = 0$ on $\partial\Omega_\varepsilon$.
The geometric lemma contained in \cite[Proposition~A.2]{Cesana2024a} cannot be applied in our conditions, since $\partial\Omega$ is not globally of class $C^2$; for the present case, we resort to Proposition~\ref{prop202510111410}. 
\end{proof}

\begin{remark}\label{2512101322}
The following remarks and observations regarding Theorem~\ref{2502282100} are in order.
\begin{enumerate}
\item The result contained in this theorem is that
$$\text{[\,$\hat{v}$ solves \eqref{2502131930} $\Leftrightarrow$ $\hat\epsilon$ solves \eqref{2502131927}\,]\, $\Leftrightarrow$\, \eqref{eq_cofattore} holds,}$$
provided there is enough regularity (as per the assumption in (III)). 
This regularity is needed to ensure that the boundary integrals in \eqref{2502131927} are well defined.

\item Theorem~\ref{2502282100} above contains the results of \cite[Theorem~3.3]{CFM2025}, under weaker assumptions on the regularity of $\partial\Omega$ (therefore providing a stronger statement).
We point out that, in \cite[Theorem~3.3]{CFM2025}, the proof of part (III) is obtained with a different strategy: there, the smoothness of the boundary allows one to conclude by invoking classical elliptic regularity results. 
In contrast, in the present case, the proof can be achieved thanks to Proposition~\ref{prop202510111410} in the Appendix, which allows us to extend the result in \cite[Proposition~A.2]{Cesana2024a} to the case of a domain~$\Omega$ as in Definition~\ref{def_Omega}, where the boundary is globally Lipschitz and piecewise $C^2$.

\item    
From an algorithmic  perspective, Theorem \ref{2502282100} (Part (I)) suggests the following strategy: first solve the $3N$ cell problems \eqref{eq:2410171611} once for a given geometry and core radius, then reconstruct the Airy potential $\hat{v}$ for arbitrary sets of Burgers vectors and Frank angles via $\eqref{eq:202411261130}$. This decouples geometry from defect data and turns the equilibrium problem into a finite-dimensional algebraic computation in postprocessing (see Section \ref{sec:202509161701}).
Another important advantage and application of these formulas is that they simplify the treatment of the boundary conditions for the stress. By enforcing Dirichlet boundary conditions, they transform the minimization problem $\eqref{2502131930}$ in $\scrC(\Omega_\varepsilon)$, which is well posed but involves unknown boundary values of the Airy potential on the cores, into a finite dimensional algebraic minimization. See also $\cite{CARBONARA7, PETROLO20042471}$.

\item By a standard localization argument and resorting to the Sobolev Embedding Theorem, one can see that, for every $i\in\{1,\ldots,N\}$, the solutions $\kappa^i_0$ and $\kappa^i_r$ ($r=1,2$) to the problems in \eqref{eq:2410171611} are (at least) of class $C^{2,\beta}$ (for any $\beta\in(0,1)$) up to $\partial B_\varepsilon^i$\,.
As a consequence, $\nabla^2\hat v\,t$ is well defined pointwise on $\partial B_\varepsilon^i$ for every $i\in\{1,\ldots,N\}$, and so is the normal stress $\hat\sigma\,n$.
\end{enumerate}
\end{remark}

 \section{Numerical formulation}\label{sec:202509161701}
 
As highlighted in Remark \ref{2512101322}-3 above, the $3N$ cell problems \eqref{eq:2410171611} depend only on the geometry of the system, in particular on the locations of the defects and on the value of $\varepsilon$. They do not depend on the type of defect (dislocation vs.~disclination), nor on the magnitudes of the Burgers vectors or Frank angles. 
After the $3N$ cell problems have been solved for a given distribution of cores in the elastic body, they can be reused to compute, in real time, the Airy stress potential generated by any configuration of $N$ defects placed at those locations and with core radius $\varepsilon$, for any choice of Burgers vectors and Frank angles
via formula~\eqref{eq:202411261130}.

The prototype of the cell problem corresponds to a fourth order biharmonic equation in an $H^2$ scalar unknown. We solve this problem using the $C^0$ interior penalty discontinuous Galerkin ($C^0$--IPDG) method.
$C^0$--IPDG methods are nonconforming finite element (FE) schemes. Unlike $H^2$ conforming FE methods, which require globally $C^1$ smooth shape functions, use only $C^0$ continuous elements to approximate fourth order elliptic problems. This is achieved by modifying the weak formulation and introducing a penalization term that accounts for jumps in the gradients of the shape functions across mesh facets.

The main advantage of $C^0$--IPDG methods over $H^2$ conforming FE methods lies in their ease of implementation. While $H^2$ conforming FE methods require specialized higher-order elements, such as the Argyris or Hsieh--Clough--Tocher elements, the $C^0$--IPDG approach can be implemented using standard Lagrangian elements. For a review of such higher order elements we refer the reader to \cite{PAPANICOLOPULOS2012, PAPANICOLOPULOS2013}.
Mixing finite element methods offers a valid alternative to tackle higher order problems. For a review of $H^2$ conforming mixing finite element methods that use $C^0$ globally continuous piecewise polynomial base functions we refer to  \cite{AINSWORTH2024,BALASUNDARAM1984,FARRELL2022}.
For a comprehensive review of DG methods for second order elliptic problems, we refer the reader to~\cite{Arnold82,Arnold2002}, and for the $C^0$--IPDG formulation of the biharmonic problem, to~\cite{Georgoulis08,Brenner2012}.

Let  $\mathcal{T}$  denote a regular triangulation of  $\Omegaeps \subset \mathbb{R}^2$ into closed triangles. Define \( \edgbdouter \) as the set of boundary edges covering $\partial \Omega$, \( \edgin \) as the set of internal edges, and $\edgbdinner{i}$ as the set of boundary edges covering $\partial B_{\varepsilon}^i$ ($i = 1$,$\hdots, N$), so that \( \mathcal{E} \coloneqq \edgin \cup \edgbdouter \cup \big(\bigcup_{i=1}^N \edgbdinner{i}
\big)\) is the collection of all edges. For  $T \in \mathcal{T}$, let  $\mathrm{P}_3(T)$  be the space of polynomials of degree at most three defined on  $T$. For $I \in \{ 1, \hdots, N\}$ and $p \in \{0, 1, 2\}$ we define the following $3N$ finite element spaces
\begin{multline}
\mathcal{V}_p^I(\mathcal{T}) \coloneqq \Big \{ u \in C^{0}(\overline{\Omega}_\varepsilon) : u|_{T} \in \text{P}_3(T) \quad \forall\, T \in \mathcal{T}, \, u = 0 \text{ on } \edgbdouter, \\
u = 0 \text{ on } \edgbdinner{i} \text{ for every } i \in \{1, \ldots, N\}\setminus\{I\}, u = u_p \text{ on } \edgbdinner{I} \Big \}.
\end{multline}
These spaces are used to solve the cell formula problems associated with $\kappa_p^I$. 
For a given index $I$, only the corresponding hole is active: 
the boundary conditions on its edges are 
\[
u_0 = 1, \quad u_1 = x_1, \quad u_2 = x_2,
\] 
while all remaining holes satisfy homogeneous boundary conditions. 

For every edge $e \in \mathcal{E}$   and every scalar field $u \colon \overline{\Omega}_\varepsilon \to \mathbb{R}$, the jump and average operators are defined as follows
\begin{equation}\label{2412291421}
\llbracket u \rrbracket_{e} \coloneqq
\begin{cases}
u_{+}|_{e} - u_{-}|_{e} & \text{ if } e \in \edgin\,,  \\
u|_{e}                  & \text{ if } e \in  \edgbdouter \cup_{i=1}^N \edgbdinner{i} 
\end{cases}
\qquad
\{ u \}_{e} \coloneqq
\begin{cases}
\displaystyle{\frac{u_{+}|_{e} + u_{-}|_{e}}{2} } & \text{ if } e \in \edgin\,,  \\
u|_{e} & \text{ if } e \in \edgbdouter \cup_{i=1}^N \edgbdinner{i} 
\end{cases}
\end{equation}
where $u_{+} = u |_{T^{+}}$, $u_{-} = u |_{T^{-}}$, and $T^{+}$,
$T^{-}$ are two closed triangles whose intersection is   $e \in \edgin$\,. Definitions \eqref{2412291421} should be applied componentwise when $u$ is a vector or a matrix. 
For any $T \in \mathcal{\mathcal{T}}$, let $\eta \coloneqq \text{diam}(T)$.
We   take $n_e$ to be the outward unit normal on $e \in \edgin$, pointing from~$T^{-}$ to~$T^{+}$. 
Notice that $\llbracket \partial_{n_e} u \rrbracket$ and $\{ \partial_{n_e n_e} u \}$ are independent of the particular labeling of two adjacent triangles $T^{+}$ and $T^{-}$. The vector $n_e$ denotes the outward unit normal when $e \in \edgbdouter$ or $e \in \edgbdinner{i}$\,, for $i \in \{1, \hdots, N \}$.

 We are now in the position to show the $C^0$--IPDG formulation for the $3N$ cell formula problems \eqref{eq:2410171611}. For a given $p \in \{0, 1, 2\}$ and a given $I \in \{1, \hdots, N\}$, find $\kappa_p^I$ $\in \mathcal{V}_p^I(\mathcal{T})$ such that
 \begin{equation}
    \label{eq:202412271313}
    A(\kappa_p^I, \varphi) =  \ell(\varphi)
    \quad \forall \varphi \in \mathcal{C}(\mathcal{T}),
\end{equation}
where
\begin{align}
    \label{eq:20260191724}
    &A(\kappa_p^I, \varphi) \coloneqq \sum\limits_{T \in \mathcal{T}} \left( \int_{T} \nabla^2 \kappa_p^I : \nabla^2 \varphi \, \ud \xi \right)  - \sum_{e \in \mathcal{E}} \Big( \int_{e} \llbracket \nabla \kappa_p^I \cdot n_e \rrbracket_{e}  \{ \nabla^2 \varphi  n_e \cdot n_e \}_{e} \, \ud \Huno + \\
    &\int_{e} \llbracket \nabla \varphi \cdot n_e \rrbracket_{e}  \{ \nabla^2  \kappa_p^I n_e \cdot n_e \}_{e} \, \ud \Huno - \frac{\IP}{\{ \eta \}_e} \int_{e} \llbracket \nabla \kappa_p^I \cdot n_e \rrbracket_{e} \llbracket \nabla \varphi \cdot n_e \rrbracket_{e} \, \ud \Huno \Big), \nonumber
\end{align}
\begin{equation}
    \label{eq:20260116}
    \ell(\varphi) \coloneqq 
    \hspace{-0.5em} \sum_{e \in \edgbdinner{I} } \left( \frac{\IP}{\{ \eta \}_e} \int_{e} g_p  \partial_{n_{e}} \varphi \, \ud \Huno - \int_{e} g_p  \partial_{n_{e} n_{e}} \varphi \, \ud \Huno \right),
\end{equation}
with $g_0 = 0$, $g_1 = (n_e)_1$\,, $g_2 = (n_e)_2$.
Here, 
\begin{equation}
\mathcal{C}(\mathcal{T}) \coloneqq \left \{ u \in C^{0}(\overline{\Omegaeps}) : u|_{T} \in \text{P}_3(T) \quad \forall T \in \mathcal{T}, \, u = 0 \text{ on } \edgbdouter \cup \left(\bigcup_{i=1}^N \edgbdinner{i} \right) \right \},
\nonumber
\end{equation}
denotes the space of test functions that vanish on all boundaries.
 
We employ the
DolfinX/FEniCSx framework \cite{BarattaEtal2023} for finite element discretization, and PETSc \cite{petsc-tao-users-manual} for efficient and scalable linear algebra operations.  Our implementation is available at \url{https://github.com/EdoardoF1993/planar_elasticity_dislocations_disclinations}.
 
\subsection{Validation of the numerical implementation}

We validate the numerical framework presented in~\eqref{eq:202412271313}
by comparing numerical solutions with exact formulas available in radial geometry.
Throughout this section, 
 $\Omega$ is the unit disk centered at the origin so that $\Omegaeps = B_1(0) \setminus \overline{B}_{\varepsilon}(0)$. We set $\nu = 0.15$ and $E=1$ and use third-order polynomial shape functions. The interior penalty parameter has been selected via trial-and-error and set equal to $\IP = 10$ for all simulations discussed in this paper. 
Comparable values are used in standard implementations, see for instance \cite{Brenner:Von_Karman} and the DOLFINx biharmonic demo available at \href{https://docs.fenicsproject.org/dolfinx/v0.7.2/python/demos/demo_biharmonic.html}{docs.fenicsproject.org}.

\paragraph{Validation on the solutions to cell formula problems \eqref{eq:2410171611}.}
\label{sec:202509161702}

 In this paragraph, we construct numerical solutions to \eqref{eq:2410171611}, and  compare 
 them with the corresponding analytical expressions.
In  $\Omega_{\varepsilon}$\,, the three (unique) solutions to the corresponding three cell formula problems read  
\begin{equation}
    \label{eq:202503261033}
    \kappa^1_{0}(x) = C_1 (|x|^2 - 1) + C_2 x^2 \ln |x| + C_3 \ln|x|, \quad \kappa^1_{1}(x) = P(x) \frac{x_1}{|x|} \quad \text{and} \quad \kappa^1_{2}(x) = P(x) \frac{x_2}{|x|},
\end{equation}
where $P(x) \coloneqq \frac{1}{8} C_4 |x|^3 + C_5 |x| + C_6 |x|^{-1} +\frac{1}{2} C_7 |x| \ln |x|$, and 
\begin{align}
    \label{eq:20250510150}
    &C_1 \coloneqq -\frac{C_2 + C_3}{2}, \quad C_2 \coloneqq 2\frac{1 - \varepsilon ^2}{\left(1 - \varepsilon^2 \right)^2-4 \varepsilon ^2 \ln ^2(\varepsilon )}, \quad C_3 \coloneqq -\frac{4 \varepsilon ^2 \ln (\varepsilon )}{\left(1 - \varepsilon ^2 \right)^2-4 \varepsilon ^2 \ln ^2(\varepsilon )},\\
    &C_4 \coloneqq -\frac{4}{1 -\varepsilon^2 + \left(\varepsilon^2 + 1 \right) \ln(\varepsilon )}, \quad C_5 \coloneqq -C_4 \frac{ 1 + \varepsilon^2}{2}, \quad C_6 \coloneqq - \frac{1-\varepsilon^2}{8} C_4, \quad C_7 \coloneqq - \frac{\varepsilon^2}{8} C_4. \notag
\end{align}
Figure~\ref{fig:202507281047} shows a superposition of the exact solutions and their numerical approximations, with agreement observed in both energy norms and pointwise values for various values of the core radius
  \(\varepsilon \in (0, 1)\).

An important point in our mathematical theory is the imposition of the affine boundary conditions for all boundary value problems for the Airy function. As explained in Section \ref{2511271518}, see also Theorem \ref{2502282100} for the mathematical framework and Remark \ref{2512101322}-4 to Theorem \ref{2502282100}, this guarantees that the normal stress vanishes. We systematically verify that this zero stress condition is obtained in our numerical simulations, see Fig. \ref{fig:202507281047} bottom right.

Owing to the radial symmetry of the problem, $\kappa_0^1$ and $\kappa_0^2$ differ only by a 
rotation of the domain. Consequently, their $H^2$--norms are identical. For this reason, the comparison between the numerical and analytical solutions for the cell problem associated with $\kappa_0^2$ is omitted.

\begin{figure}[h!]
    \centering
    \begin{minipage}[b]{0.49\textwidth}
        \centering
        \includegraphics[width=\textwidth]{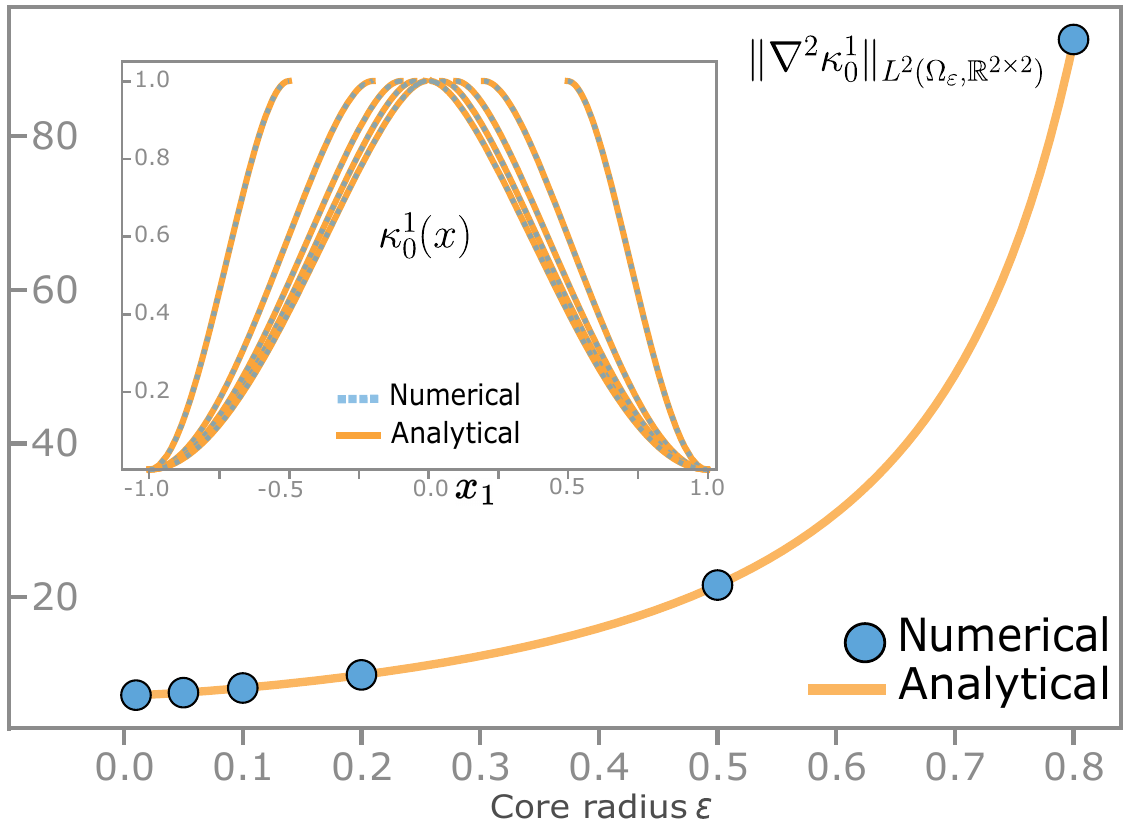}
    \end{minipage}
    \begin{minipage}[b]{0.49\textwidth}
        \centering
        \includegraphics[width=\textwidth]{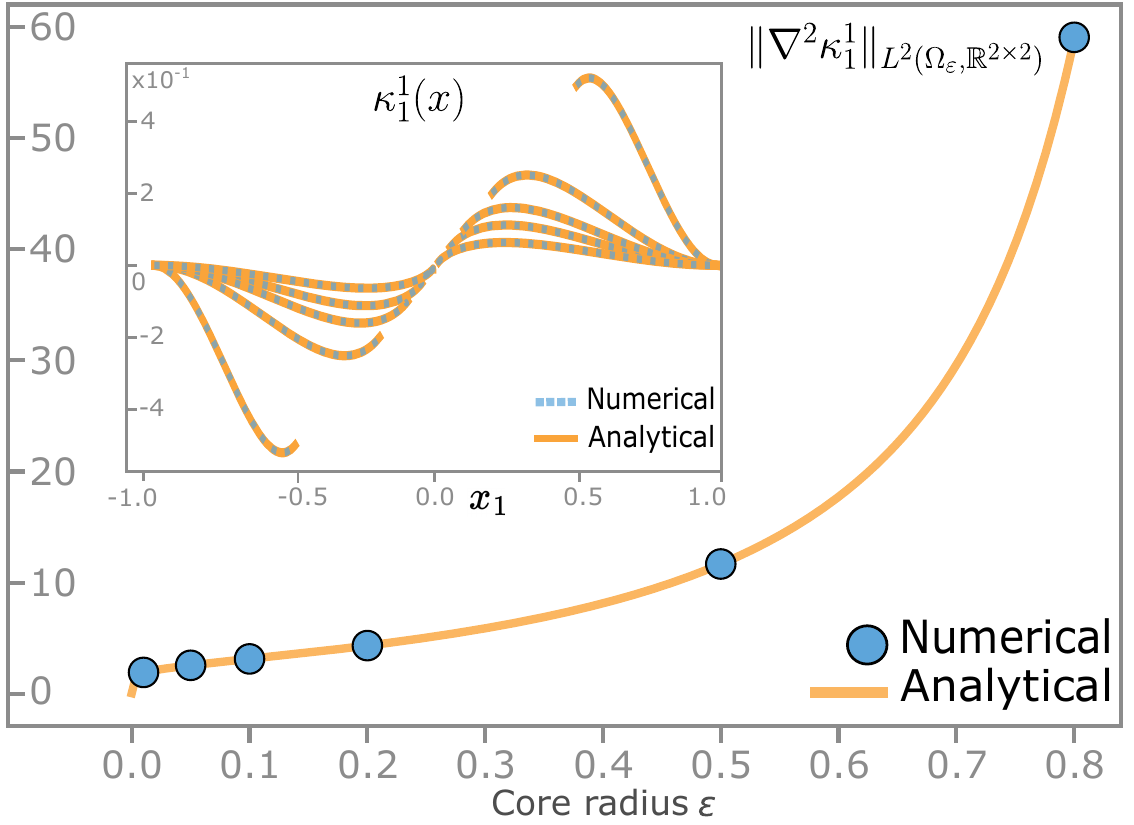}
    \end{minipage}
 
    \begin{minipage}[b]{0.49\textwidth}
        \centering
        \includegraphics[width=\textwidth]{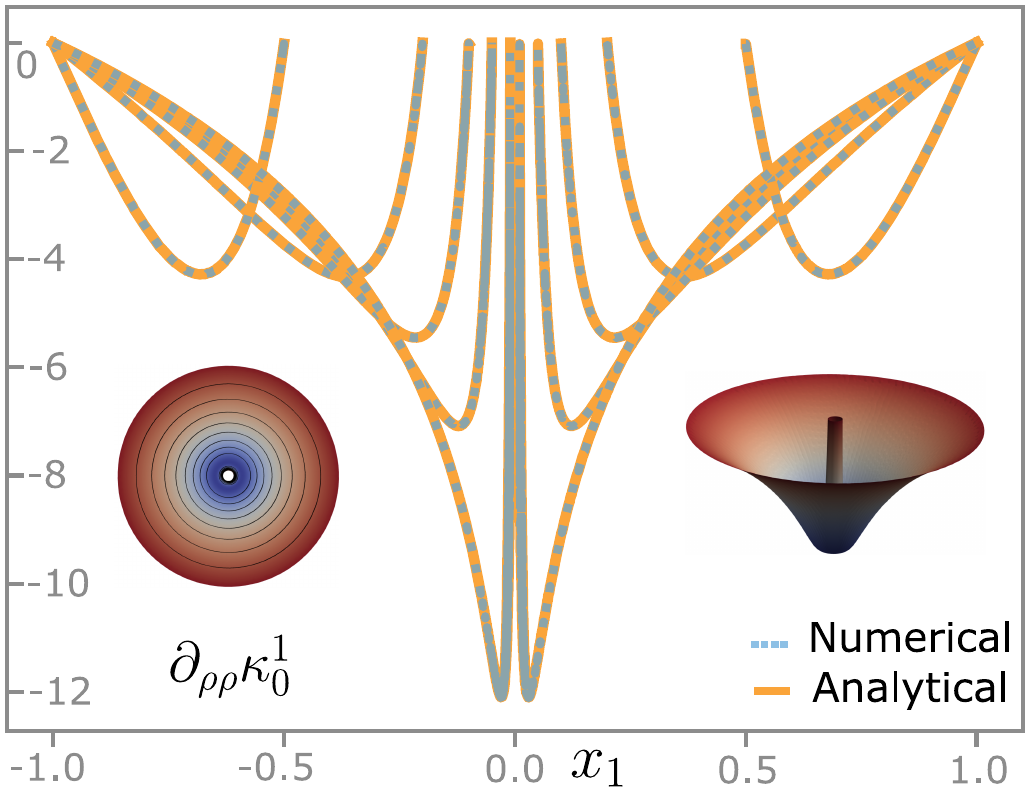}
    \end{minipage}
    \begin{minipage}[b]{0.49\textwidth}
        \centering
        \includegraphics[width=\textwidth]{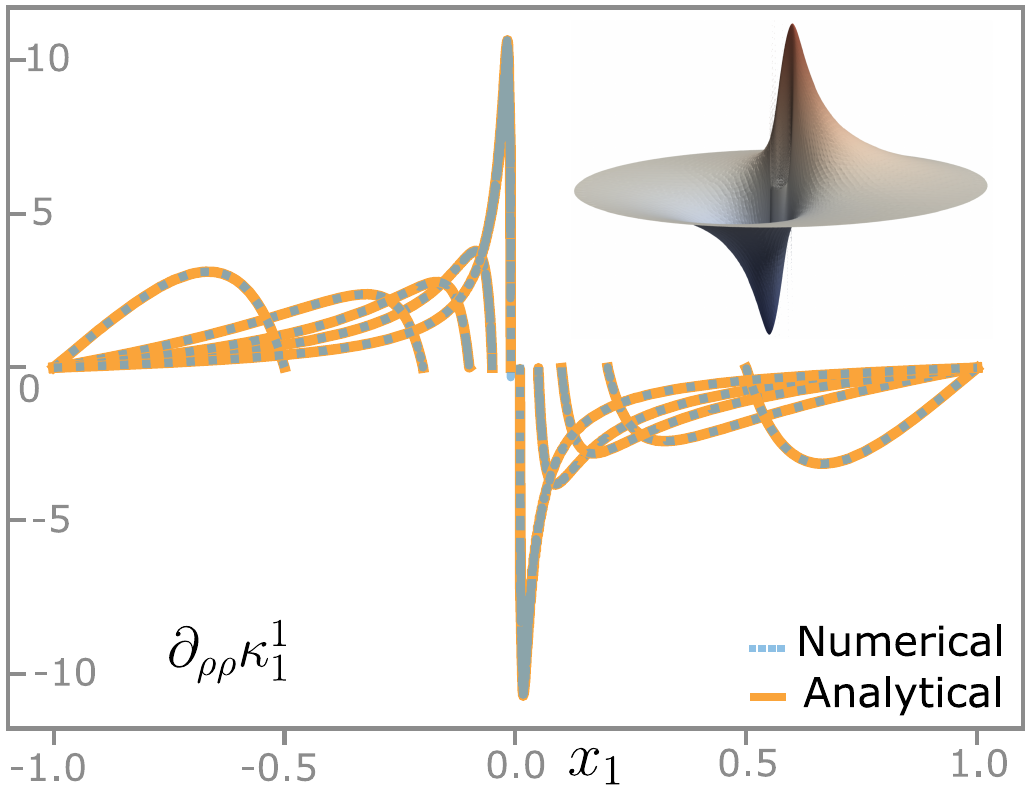}
    \end{minipage}
    
    \caption{Cell formulas, comparison of analytical and numerical solutions. 
    The yellow curves represent the analytical expressions computed using Eqs. \eqref{eq:202503261033}, while the blue circles are computed numerically. 
    Top row: $L^2$--norm of  $\nabla^2 \kappa_0^1$ (left) and $\nabla^2 \kappa_1^1$ (right) for different values of $\varepsilon \in (0, 1)$. 
    Inset: profiles of $\kappa_0^1$ (left) and $\kappa_1^1$ (right) for several values of $\varepsilon$, with $x_1 \in (-1, 1) \setminus [-\varepsilon, \varepsilon]$ and $x_2 = 0$. 
     Bottom row: profiles of $\partial_{\rho\rho} \kappa_0^1$ (left) and $\partial_{\rho\rho} \kappa_1^1$ (right) for different values of $\varepsilon$, with $x_1 \in (-1, 1) \setminus [-\varepsilon, \varepsilon]$, $x_2 = 0$, and $\rho \coloneqq |x|$. 
     Inset: three-dimensional and contour plots of $\partial_{\rho\rho} \kappa_0^1$ (left) and $\partial_{\rho\rho} \kappa_1^1$ (right) computed for $\varepsilon = 0.05$. A mesh size of $\eta = 0.08$ is used, with further refinement near the cores.}
    \label{fig:202507281047}
\end{figure}

\paragraph{Validation on configurations with disclinations and dislocations.}
\label{sec:202509161703}

In this paragraph,  
we obtain numerical solutions for configurations that contain a single disclination placed at different positions of the core. These solutions are compared with the exact solutions for the same configuration of disclinations on the full disk $B_1(0)$. The aim of this comparison is to show that the numerical model can approximate the stress fields and the energies of the reference model when $\varepsilon$ is sufficiently small.

The analytical expression of the Airy   
function induced by a single wedge disclination of Frank angle $s$, located at a generic $y^{(k)} \in B_1(0)$ is 
\begin{equation}
\label{eq:202412271400}
\begin{split}
\hat v(x) =&\, - \frac{s E}{(1-\nu^2)} \mathscr{G}(x; y^{(k)}), \\
\mathscr{G}(x; y) \coloneqq&\, \begin{cases}
\displaystyle \frac{1}{16\pi} \left(\left(1-|x|^2\right) \left(1-|y|^2\right)+|x-y|^2 \displaystyle \ln  \frac{|x-y|^2}{|x|^2 |y|^2 - 2x \cdot y + 1} \right) \quad & \text{for } x \ne y, \\
0 \quad & \text{otherwise,}
\end{cases}
\end{split}
\end{equation}
where $\mathscr{G}$ is the Green function for the bilaplacian operator in the unit disk, see \cite{PolyharmonicGreenFunction}.
Results are shown in Figure~\ref{fig:202509121636} for two values of $\varepsilon$, namely $\varepsilon = 0.01$ on the left and $\varepsilon = 0.05$ on the right. 
The comparison between these two cases indicates that, for values of $\varepsilon$ of the order of $10^{-2}$ or smaller, the regularized model accurately captures the   behavior of the continuum model.

Observe
 that the norm of the stress decreases as the disclination approaches the boundary of~$\Omega$, accompanied by a corresponding reduction in the magnitude of the Airy stress function. The same behavior has been observed in continuum models of dissipative evolution of disclinations defined on the full domain~$\Omega$, where both the elastic energy and the stress field tend to zero as the disclination moves toward the boundary of a circular domain, until it is eventually expelled, see \cite{CGMP2025}.

\begin{figure}[h!]
    \centering

    \begin{subfigure}[b]{0.49\textwidth}
        \centering
        \begin{tikzpicture}[remember picture,overlay]
            \node[anchor=north west] at (2.48,-0.5) {\small Eq.\hspace{-0.1cm}~\eqref{eq:202412271400}};
            \node[anchor=north west] at (-3,-4.85) {\tiny Eq.\hspace{-0.1cm}~\eqref{eq:202412271400}};
        \end{tikzpicture}
        \includegraphics[width=\textwidth]{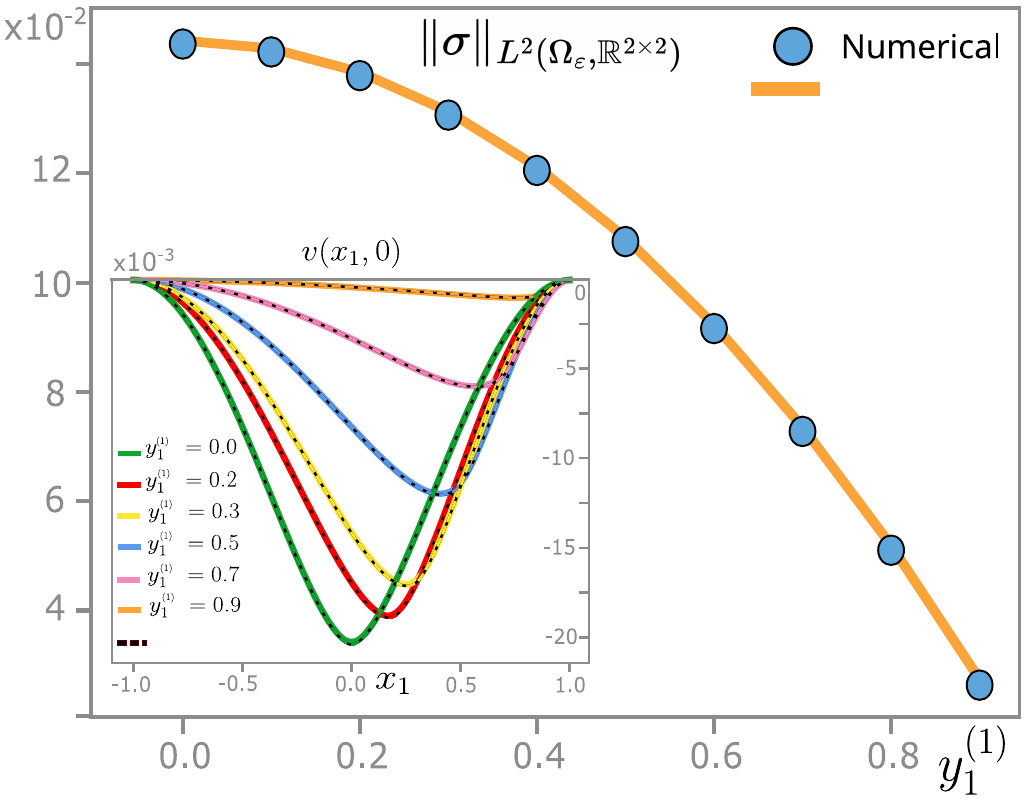}
    \end{subfigure}
    \hfill
    \begin{subfigure}[b]{0.49\textwidth}
        \centering
        \begin{tikzpicture}[remember picture,overlay]
            \node[anchor=north west] at (2.56,-0.5) {\small Eq.\hspace{-0.1cm}~\eqref{eq:202412271400}};
             \node[anchor=north west] at (-2.9,-4.9) {\tiny Eq.\hspace{-0.1cm}~\eqref{eq:202412271400}};
        \end{tikzpicture}
        \includegraphics[width=\textwidth]{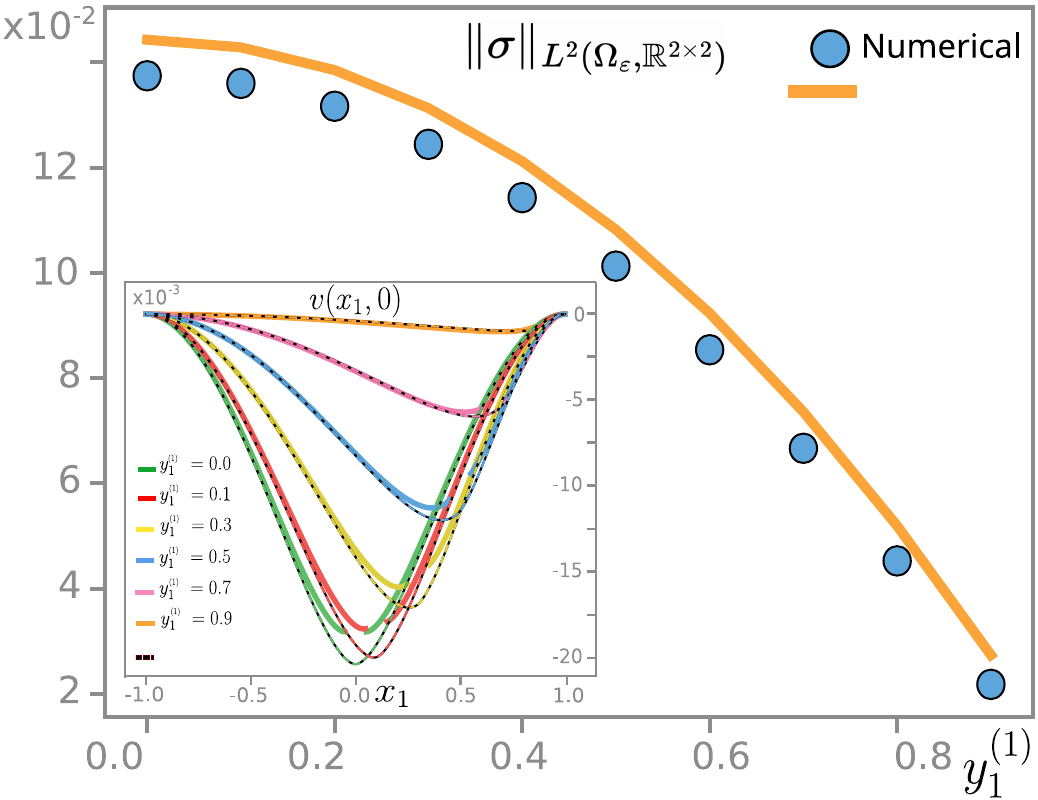}
    \end{subfigure}
    \caption{
    Isolated disclination benchmarks. Configurations with an isolated disclination at different locations.
    Left: $L^2$--norm of $\sigma$ for a single positive disclination located on different positions on the $x_1$ axis. The yellow curve indicates the analytical solution, the blue dots indicate the numerical solution ($\varepsilon = 0.01$). The inset shows the profile of the Airy potential for both the analytical solution (black dashed curve) and numerical solution (colored curve) obtained using $\varepsilon = 0.01$. Right: profiles of the Airy potential for $\varepsilon = 0.05$. A mesh size of $\eta = 0.02$ is used, with additional refinement in the vicinity of the cores.}
    \label{fig:202509121636}
\end{figure}

As a second numerical test we consider two wedge disclinations of 
 opposite sign located on the~$x_{1}$ axis at equal distance from the origin. The analytical expression of the corresponding Airy stress function  is obtained from Eq. \eqref{eq:202412271400}, using the linearity of the problem. The numerical model reproduces the stress field  accurately  for $\varepsilon = 0.01$, see the left panel of Figure \ref{fig:202509121637}.
The profile of the $L^{2}$ norm of the stress shows that the stress, and consequently also the mechanical energy, tend to vanish as the disclinations are located closer to $\partial \Omega$, as well as when they are located  closer to the origin of the domain, where they eventually coincide and annihilate.
We remark that the numerical approximations of the stresses considered here correspond to the quantities denoted by $\hat{\sigma}$ in Theorem \ref{2502282100}. For ease of notation, in this section and in what follows, we drop the hat symbol.
Stress values are reconstructed from the Airy potential via Eq. \eqref{eq_Airyoperatorgen}.

As our third and final test we consider a single edge dislocation centered at $x = 0$. We report the analytical expression of the corresponding Airy stress function
\begin{equation}
\label{eq:202511201328}
    \hat v(x) = b \times \dfrac{E}{1-\nu^2} \mathcal{F}(x), \qquad \mathcal{F}(x) \coloneqq
    \begin{cases}
    \dfrac{|x|^{2} - \,\ln |x|^{2} - 1}{8\pi}\, x, & x \neq 0, \\[6pt]
    0, & x = 0 .
    \end{cases}
\end{equation}
A direct computation shows that  $\Delta^2 \hat v(x) = - b \times \Delta^2 v_D(x)$, where the function $v_D$ (see \cite[Formula (2.22b)]{CFM2025}) satisfies the problem $\Delta^2 \left(e \times v_D \right) = e \times \nabla \de_0$ for any $e\in\mathbb{R}^2$, subject to the boundary conditions $\hat v(x) = \partial_n \hat v(x) = 0$ on $\partial B_1(0)$, so that $\Delta^2 \hat v = - \frac{E }{1-\nu^2}\, b \times \nabla \de_0$. 
The right panel of Figure~\ref{fig:202509121637} shows that the Airy stress function obtained numerically for several values of $\varepsilon$ approaches  the analytical expression in the limit   $\varepsilon \to 0$. The inset also shows that the square of the stress (and, consequently, the mechanical energy of the system) diverges proportionally to $|\log\varepsilon|$, as expected for an edge dislocation.

\begin{figure}[h!]
    \centering

    \begin{subfigure}[b]{0.49\textwidth}
        \centering
        \begin{tikzpicture}[remember picture,overlay]
            \node[anchor=north west] at (2.38,-2.35) {\small Eq.\hspace{-0.1cm}~\eqref{eq:202412271400}};
             \node[anchor=north west] at (-0.78,-3.35) {\tiny Eq.\hspace{-0.1cm}~\eqref{eq:202412271400}};
        \end{tikzpicture}
        \includegraphics[width=\textwidth]{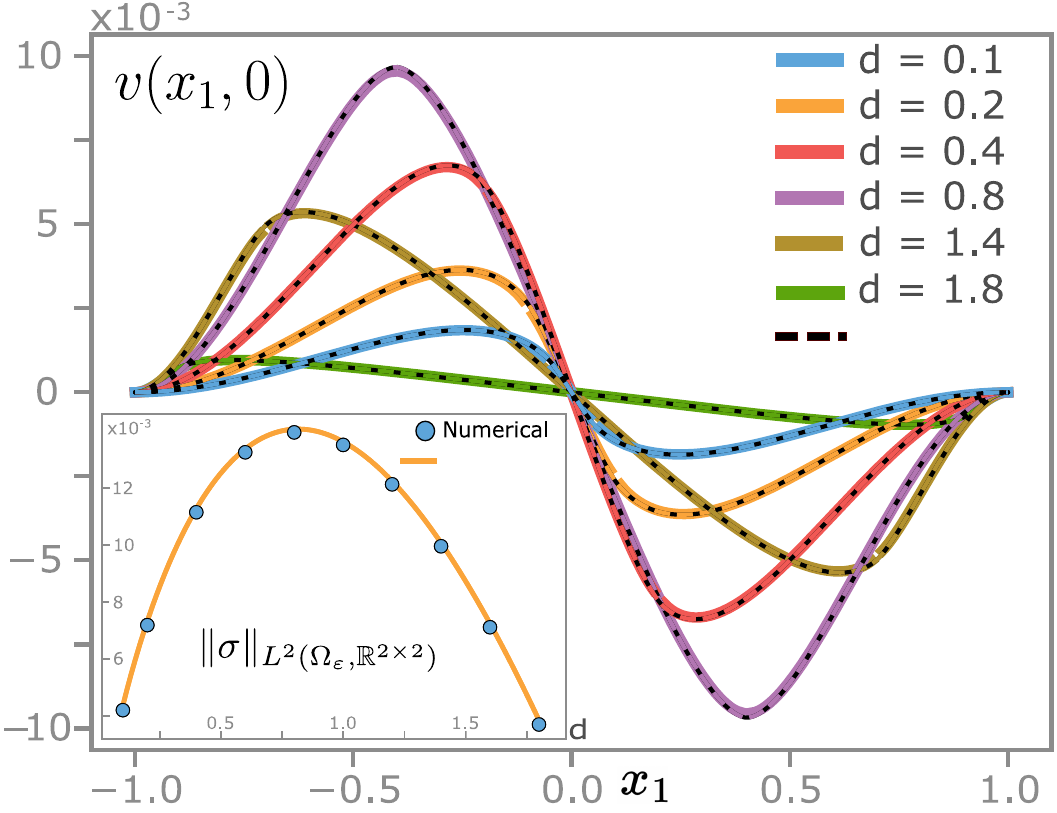}
    \end{subfigure}
    \hfill
    \begin{subfigure}[b]{0.49\textwidth}
        \centering
        \begin{tikzpicture}[remember picture,overlay]
            \node[anchor=north west] at (2.5,-2.3) {\small Eq.\hspace{-0.1cm}~\eqref{eq:202511201328}};
        \end{tikzpicture}
        \includegraphics[width=\textwidth]{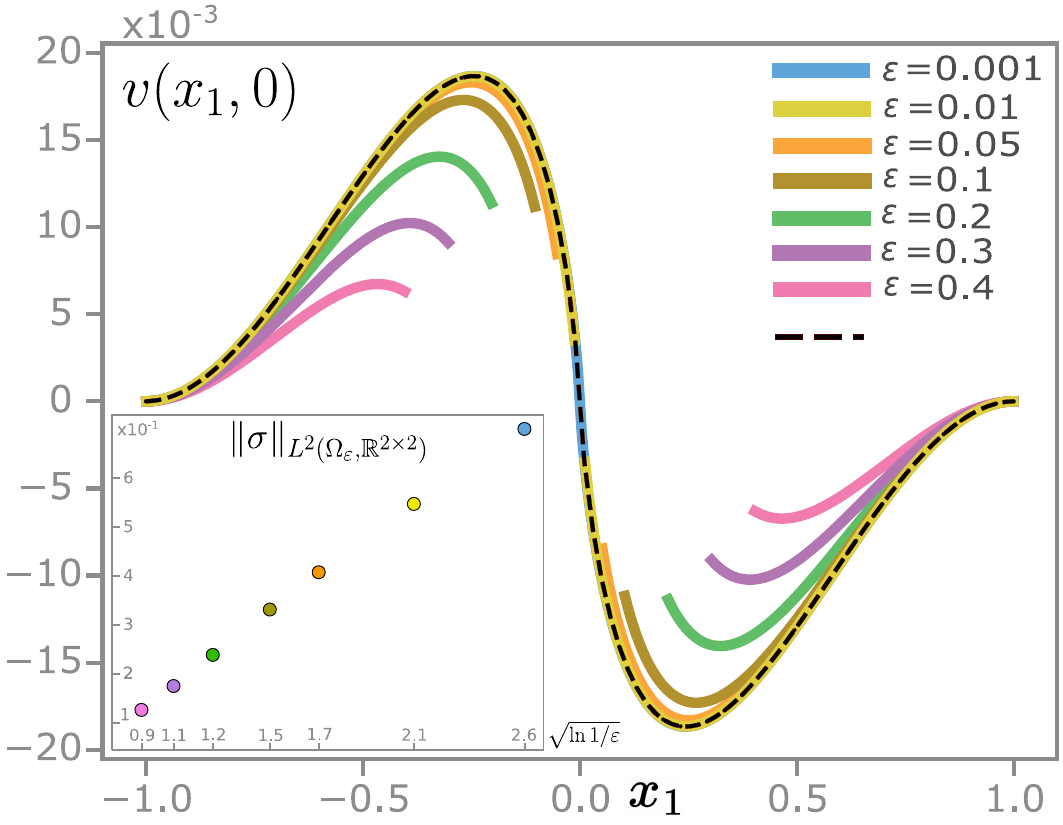}
    \end{subfigure}

    \caption{Left: profiles of the Airy potential for a pair of wedge disclinations with opposite Frank angles, symmetrically located along the $x_1$-axis, both at a distance $\tfrac{d}{2}$ from the origin. 
    The black dashed curve represents the solution obtained using Eq.~\eqref{eq:202412271400}, while the continuous colored curve shows the numerical solution for $\varepsilon = 0.01$. 
    The inset displays the $L^2$-norm of $\sigma$ for different values of $d$. 
    Right: profiles of the Airy potential computed for an edge dislocation, for various values of $\varepsilon$, compared with the analytical solution (Eq.~\eqref{eq:202511201328}). 
    The inset shows values of stress norms obtained numerically for different values of $\varepsilon$. A mesh size of $\eta = 0.02$ is used, with additional refinement in the vicinity of the cores.}   
    \label{fig:202509121637}
\end{figure}

\section{Illustration of complex interactions}\label{2512112300}

We present simulations of relevant configurations of isolated wedge    disclinations, disclination dipoles, and edge dislocations. 
 In what follows,
we consider a core size \( \varepsilon = 0.01 \), unless otherwise stated.
We consider Burgers vectors of magnitude \( |b| = 10^{-3} \) and disclinations with Frank angle \( s = \pm 0.1 \). The numerical values are nondimensional and chosen for illustrative purposes only. Their absolute magnitudes reflect relative scalings between parameters, for example \( |b| \approx \varepsilon \), and are not intended to represent any specific material. 

\subsection{Stack of edge dislocations and isolated wedge disclination}
\label{sec:202509301227}

We  consider the interaction between a fixed stack (pile--up) of edge dislocations and an isolated wedge disclination positioned at varying distances. 
The purpose of this study is to examine how the mechanical stress   generated by the dislocation–disclination system depends on the sign of the Frank angle and on the orientations of the dislocations.
The computational domain is the square
\(\Omega = (-1,1)\times(-1,1)\).
The three edge dislocations are located at \(x_1 = -0.05\), \(x_1 = -0.025\), and \(x_1 = 0\), respectively, and the wedge disclination is positioned on the positive \(x_1\) axis at a distance \(d\) from the origin, with \(d\) treated as a control parameter. The corresponding perforated domain \(\Omega_{\varepsilon}\) is obtained by removing three closed disjoint balls of radius \(\varepsilon\) centered at the dislocation cores and a fourth closed ball of the same radius centered at the disclination.

\begin{figure}[h!]
\centering
\begin{minipage}{0.47\textwidth}
\centering
\begin{tikzpicture}
    \node[anchor=south west,inner sep=0] (img) 
        at (0,0) {\includegraphics[width=\linewidth]{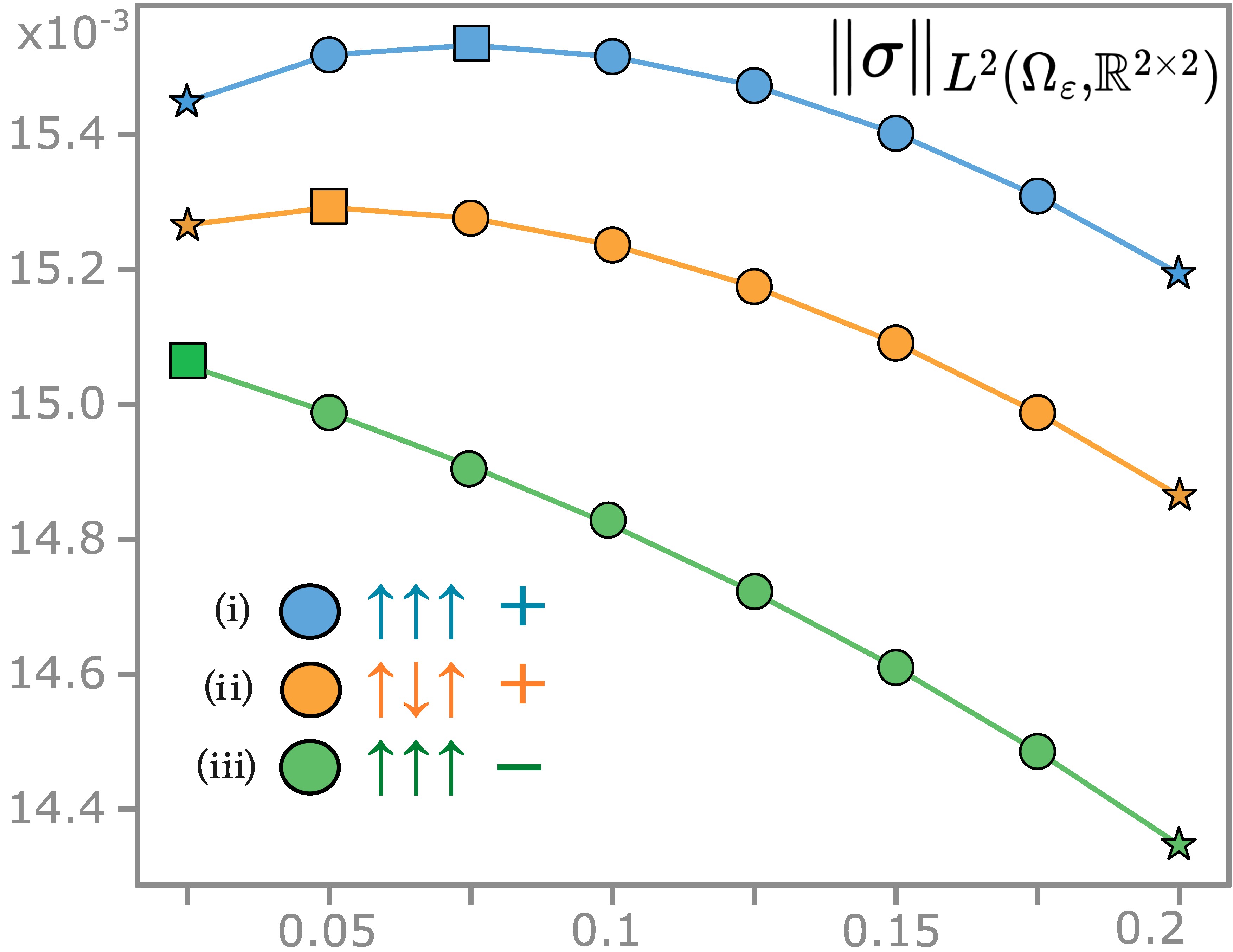}};
    \node[anchor=south] at ($(img.south)+(4.1,-0.1)$) {$y_1^{(1)}$};
\end{tikzpicture}
\end{minipage}
\hfill
\begin{minipage}{0.47\textwidth}
\centering
\begin{tikzpicture}
    \node[anchor=south west,inner sep=0] (img) 
        at (0,0) {\includegraphics[width=\linewidth]{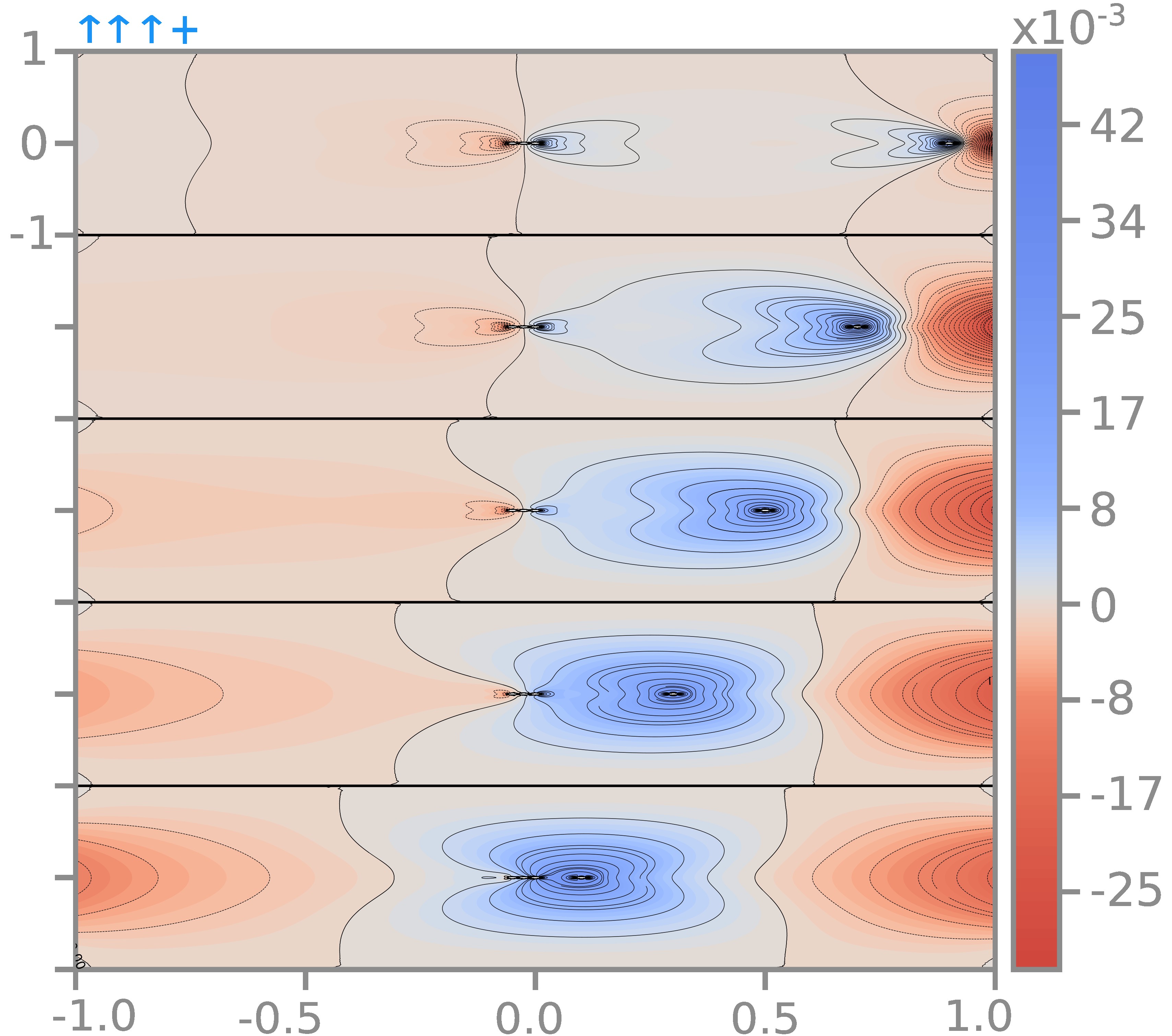}};
     \node[anchor=north] at ($(img.north)+(0,0.10)$) {$\sigma_{22}$};
    \node[anchor=north] at ($(img.south)+(0.35,0.5)$) {$x_1$};
\end{tikzpicture}
\end{minipage}

\begin{minipage}{0.47\textwidth}
\centering
\begin{tikzpicture}
    \node[anchor=south west,inner sep=0] (img) 
        at (0,0) {\includegraphics[width=\linewidth]{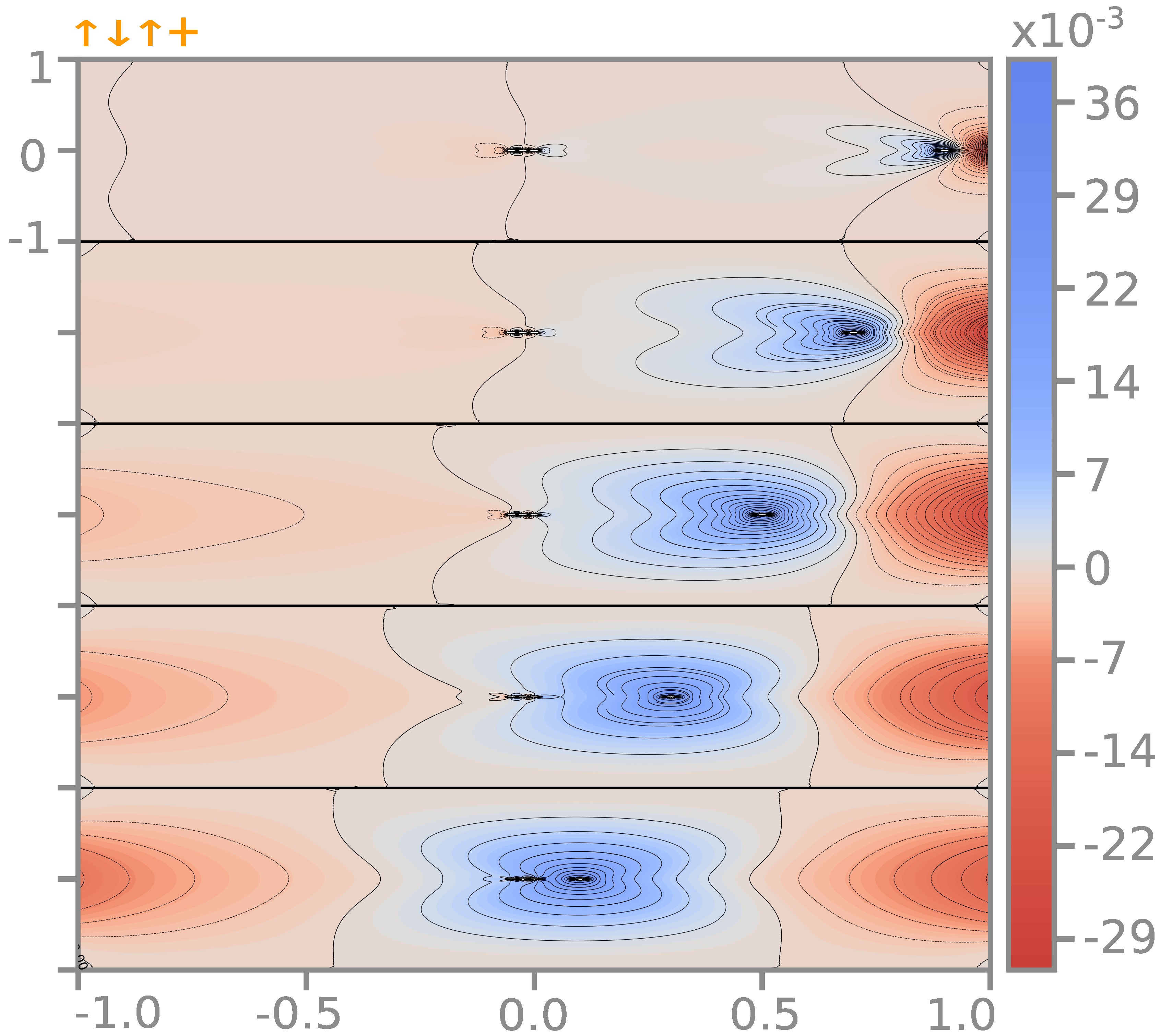}};
    \node[anchor=south] at ($(img.north)+(0,-0.4)$) {$\sigma_{22}$};
    \node[anchor=north] at ($(img.south)+(0.35,0.5)$) {$x_1$};
\end{tikzpicture}
\end{minipage}
\hfill
\begin{minipage}{0.47\textwidth}
\centering
\begin{tikzpicture}
    \node[anchor=south west,inner sep=0] (img) 
        at (0,0) {\includegraphics[width=\linewidth]{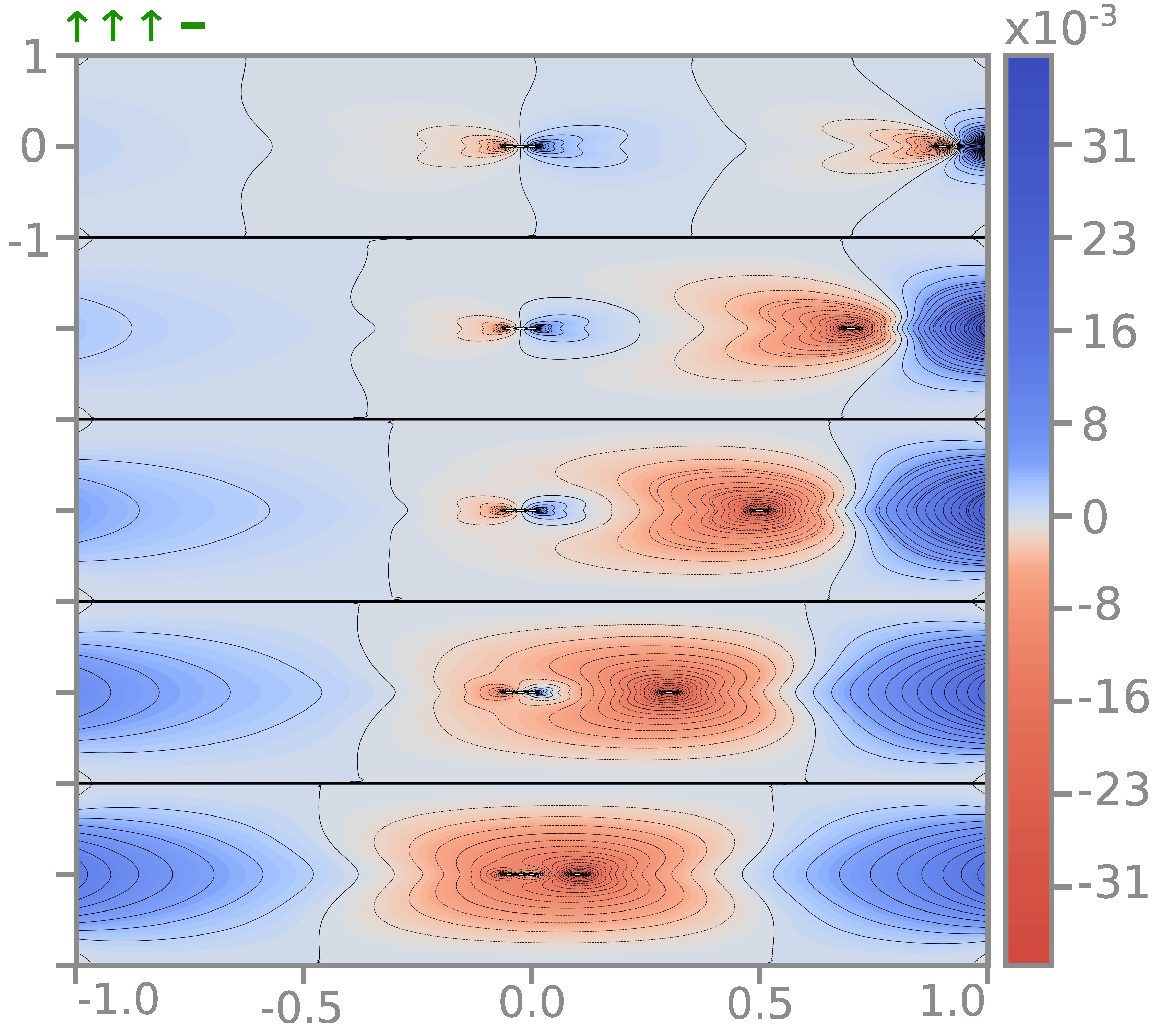}};
    \node[anchor=south] at ($(img.north)+(0,-0.4)$) {$\sigma_{22}$};
    \node[anchor=north] at ($(img.south)+(0.35,0.50)$) {$x_1$};
\end{tikzpicture}
\end{minipage}

\caption{Dislocation pile-up and isolated disclination. Top left: $\lVert \sigma \lVert_{L^2(\Omegaeps, \mathbb{R}^{2\times2})}$ as a function of $d$, the position of the wedge disclination on the $x_1$ axis. Circles represent numerically computed values, and squares (stars) indicate    maximum (local/global minimum) points. A solid line is included to guide the eye. Remaining panels:   contour plots of $\sigma_{22}$ for case (i) (top-right), case (ii) (bottom-left) and case (iii) (bottom-right). A mesh size of $\eta = 0.02$ is used, with additional refinement in the vicinity of the cores.  }
\label{fig:3D1d_combined}

\end{figure}
 
Three configurations are considered
(see Figure~$\ref{fig:3D1d_combined}$):
(i) the three edge dislocations have Burgers vectors aligned with the positive $x_2$--axis, and the wedge disclination has positive Frank angle $s = 0.1$ (blue curve); (ii) as in configuration (i), except the Burgers vector of the dislocation in the middle points downward (orange curve); 
(iii) as in  (i), except that the wedge disclination has a negative Frank angle $s = -0.1$ (green curve).

For the case (i), we observe (see top--left panel of Figure~$\ref{fig:3D1d_combined}$) $\lVert \sigma \rVert_{L^2(\Omegaeps, \mathbb{R}^{2\times2} )}$ attains its maximum when the wedge disclination is located at $d_{(i)} \approx 0.075$.
The parametric study reveals a critical disclination--stack separation establishing two energetic regimes: for smaller separations, that is, $x_1 = d < d_{(i)}$, the energy minimum corresponds to the disclination being closer to the dislocation stack, whereas for $x_1 = d > d_{(i)}$, it corresponds to a position closer to the boundary.
We observe a qualitatively similar behavior for the case (ii), although the critical distance $d_{(ii)}$ corresponding to the maximum is 
reduced to  $\approx 0.05$.
In case (iii), the stress behavior is monotone, and the mechanical system tends to separate the negative wedge disclination from the stack of edge dislocations.

The interaction between edge dislocations and the wedge disclination is localized and significant only when the distance 
among the edge dislocations is comparable to the disclination--stack separation, as illustrated by the stress field panels in Figure~\ref{fig:3D1d_combined}. 

Comparison of the stress curves for the three configurations shows that configuration (i) is the most mechanically stressed, while configuration (iii) is the least stressed. Inspection of the stress fields indicates that a negative disclination reduces the mechanical stress produced by the dislocation stack (see $\sigma_{22}$, bottom right), whereas a positive disclination increases it (see $\sigma_{22}$, top right). 
This result is consistent with the fact that a dislocation with an upward-pointing Burgers vector can be represented as a disclination dipole, where a negative wedge disclination precedes a positive one \cite{Eshelby66}.

\subsection{Stack of edge dislocations and dipole of wedge disclinations}
 
\begin{figure}[h!]
\centering
\includegraphics[width=0.6\linewidth]{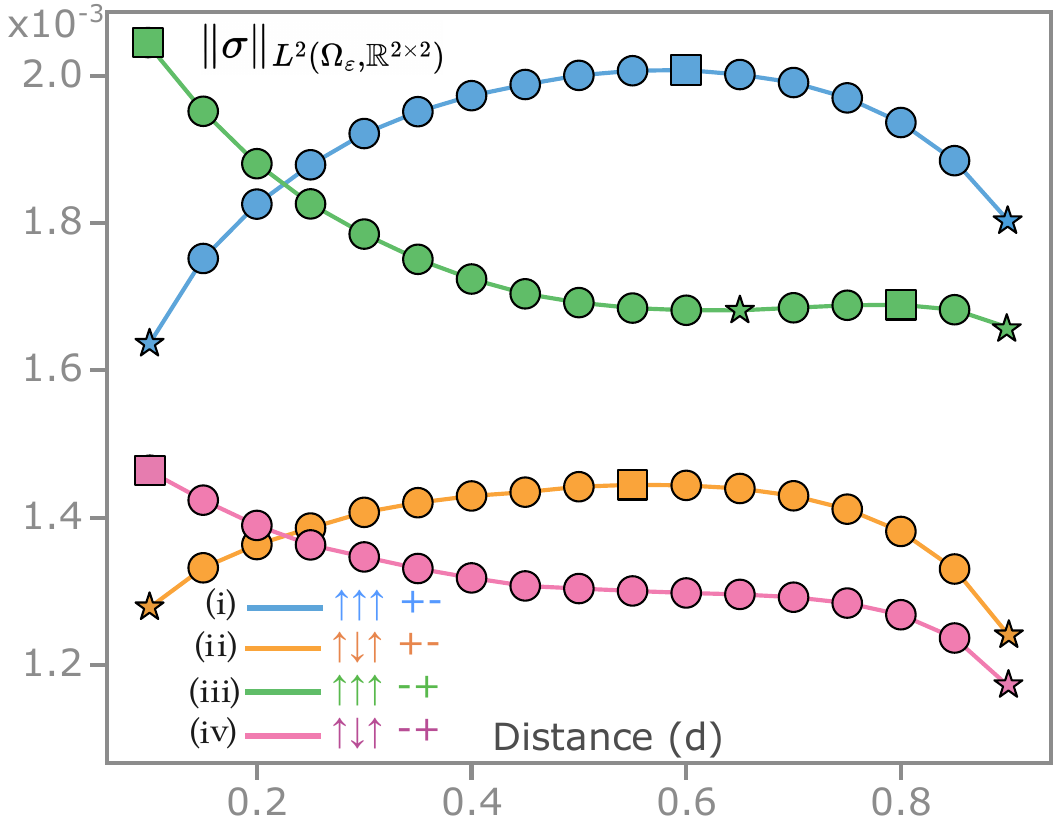}
\caption{$L^2$--norm of $\sigma$ as a function of the distance $d$ between the stack of edge dislocations and the dipole of wedge disclinations. Circles denote the numerically obtained values, while the continuous line is added to guide the eye. Square (star) symbols  represent  stationary points corresponding to local/global maxima (minima). A mesh size of $\eta = 0.02$ is used, with additional refinement in the vicinity of the cores.}
\label{fig:202509301231}
\end{figure}

We now consider a configuration consisting of a pile--up of edge dislocations interacting with a dipole of wedge disclinations, placed at varying distances.
As in the previous section, three edge dislocations are located at \(x_{1} = -0.05\), \(x_{1} = -0.025\), and \(x_{1} = 0\) within the square domain \(\Omega = (-1,1)^2\). We denote by \(d\) the distance between the dislocation pile--up and the closest disclination, and by \(h = 0.025\) the fixed separation between the two disclinations forming the dipole. The distance between the dislocation pile--up and the center of mass of the disclination dipole is therefore \(d + h/2\).

In contrast to the configuration studied in Section~\ref{sec:202509301227}, involving the interaction between a dislocation pile--up and a single disclination, which led to the emergence of strongly localized stress concentrations, the present setting exhibits long--range elastic interactions between the dislocation pile--up and the disclination dipole.

In Figure~\ref{fig:202509301231} we show the norm of the mechanical stress integrated over the associated perforated domain $\Omegaeps$ for several values of $d$ in the range $ [0.1, 0.9]$ and for four different configurations
defined by different orientations of Burgers vector and of the dipole.
Cases (i) and (ii) differ by an inversion of the orientation of the middle dislocation in the stack, while the disclination dipole is identically aligned in both configurations. In both cases, the stress curves exhibit very similar shapes, albeit with different magnitudes. Each curve displays a well--defined maximum at $d_{(i)} \approx 0.60$ and $d_{(ii)} \approx 0.55$, respectively. 
For small dipole--stack separation (below $d_{(i)}$), the energy minimum corresponds to the disclination dipole being closer to the dislocation stack, whereas for large separations (above $d_{(i)}$), it corresponds to a position closer to the boundary.
The behavior changes dramatically when the orientation of the disclination dipole is inverted, see cases (iii) and (iv).
The stress profiles are shallow and display a broad plateau,
with stress values varying within a narrow range over a large interval of the distance $d$.
In   case (iii), we observe a sequence of critical points consisting of a local minimum followed by a local maximum.
In the case (iv), the behavior of the stress curve is monotonic, and the stress decreases as the disclination dipole migrates toward the boundary.

\subsection{Disclination between a stack of dislocations}
\label{sec:202511151234}

\begin{figure}[h!]
\centering

\begin{minipage}{0.47\textwidth}
\centering
\includegraphics[width=\linewidth]{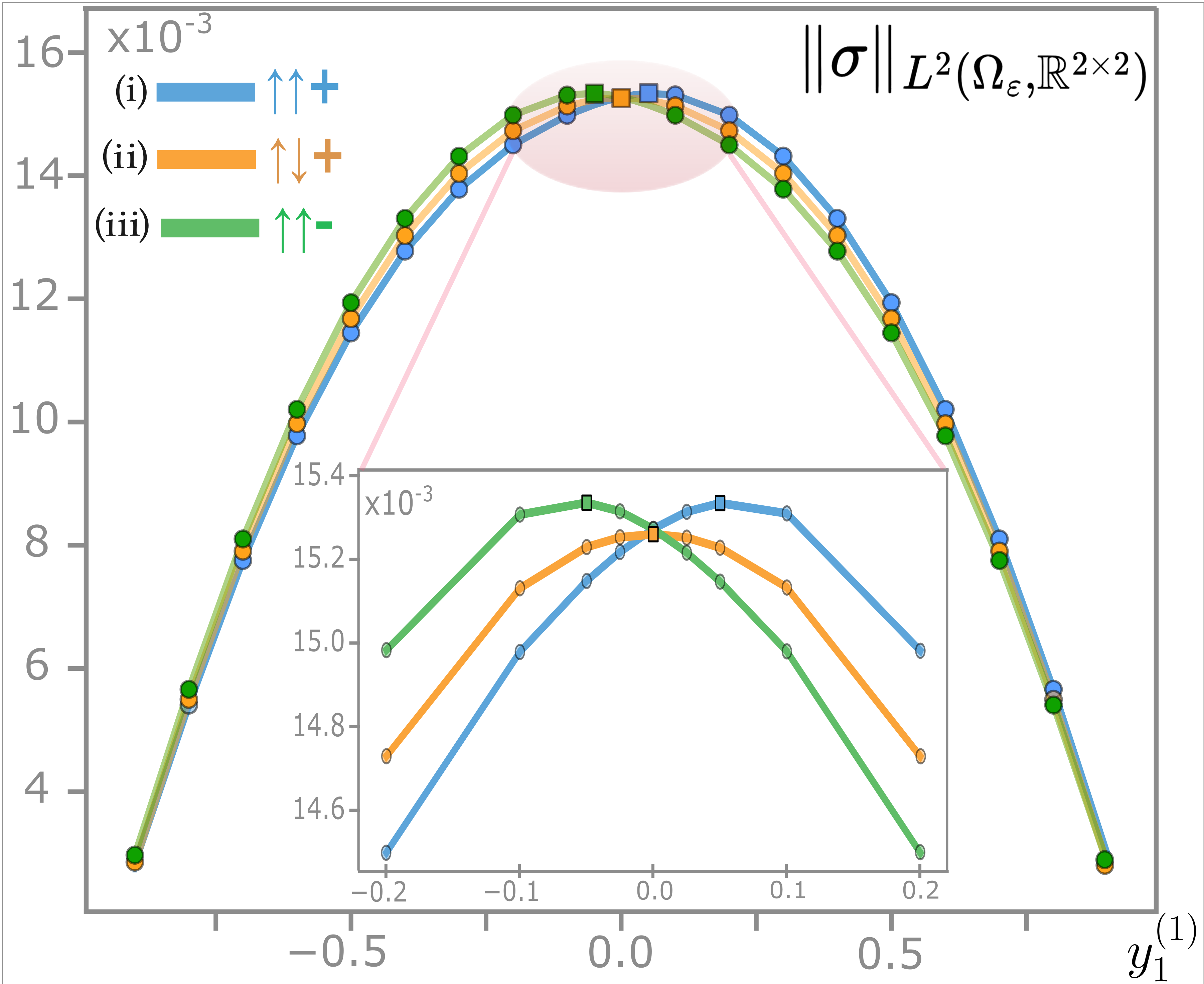} 
\end{minipage}
\hfill
\begin{minipage}{0.47\textwidth}
\centering
\includegraphics[width=\linewidth]{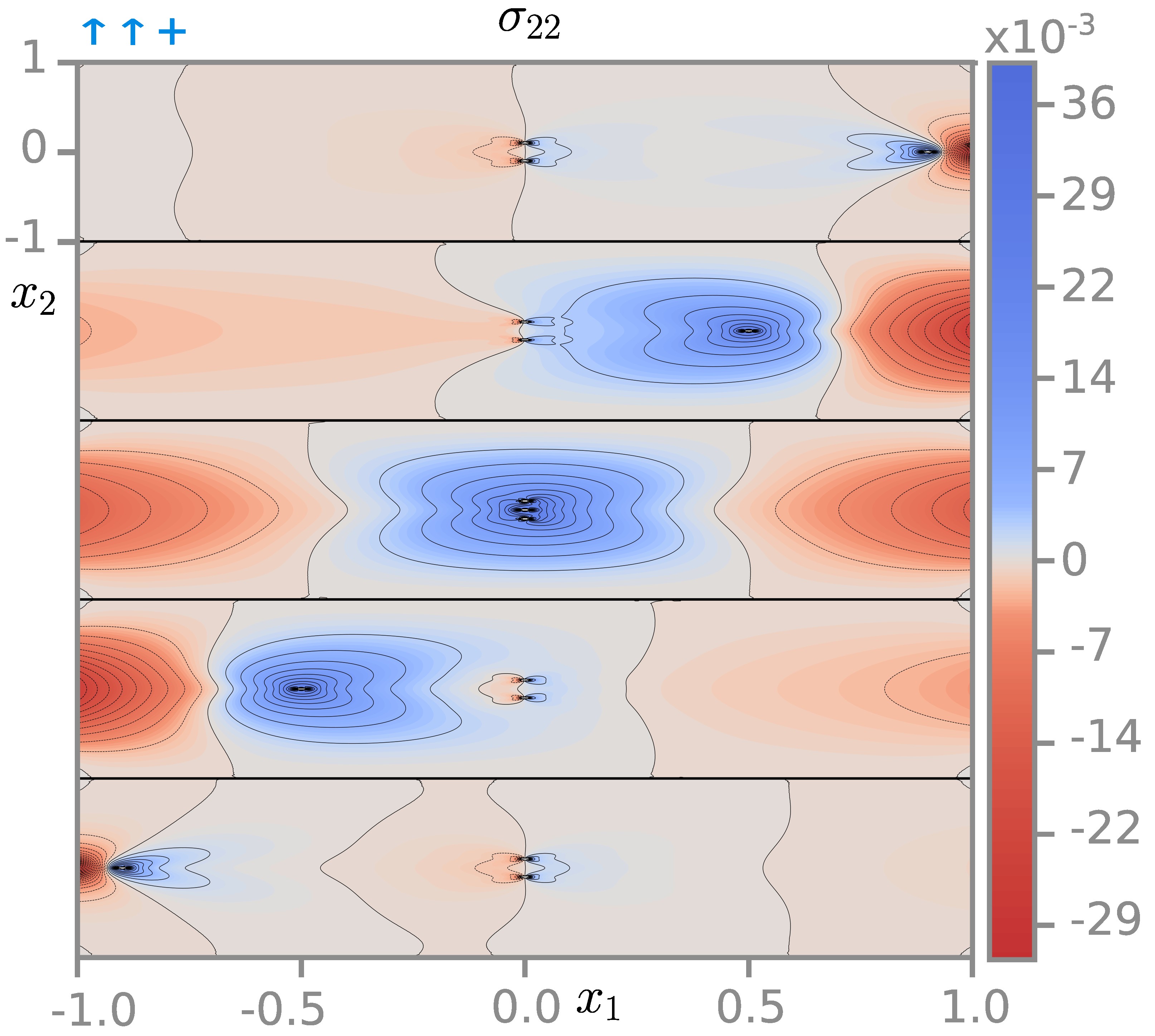}
\end{minipage}

\begin{minipage}{0.47\textwidth}
\centering
\includegraphics[width=\linewidth]{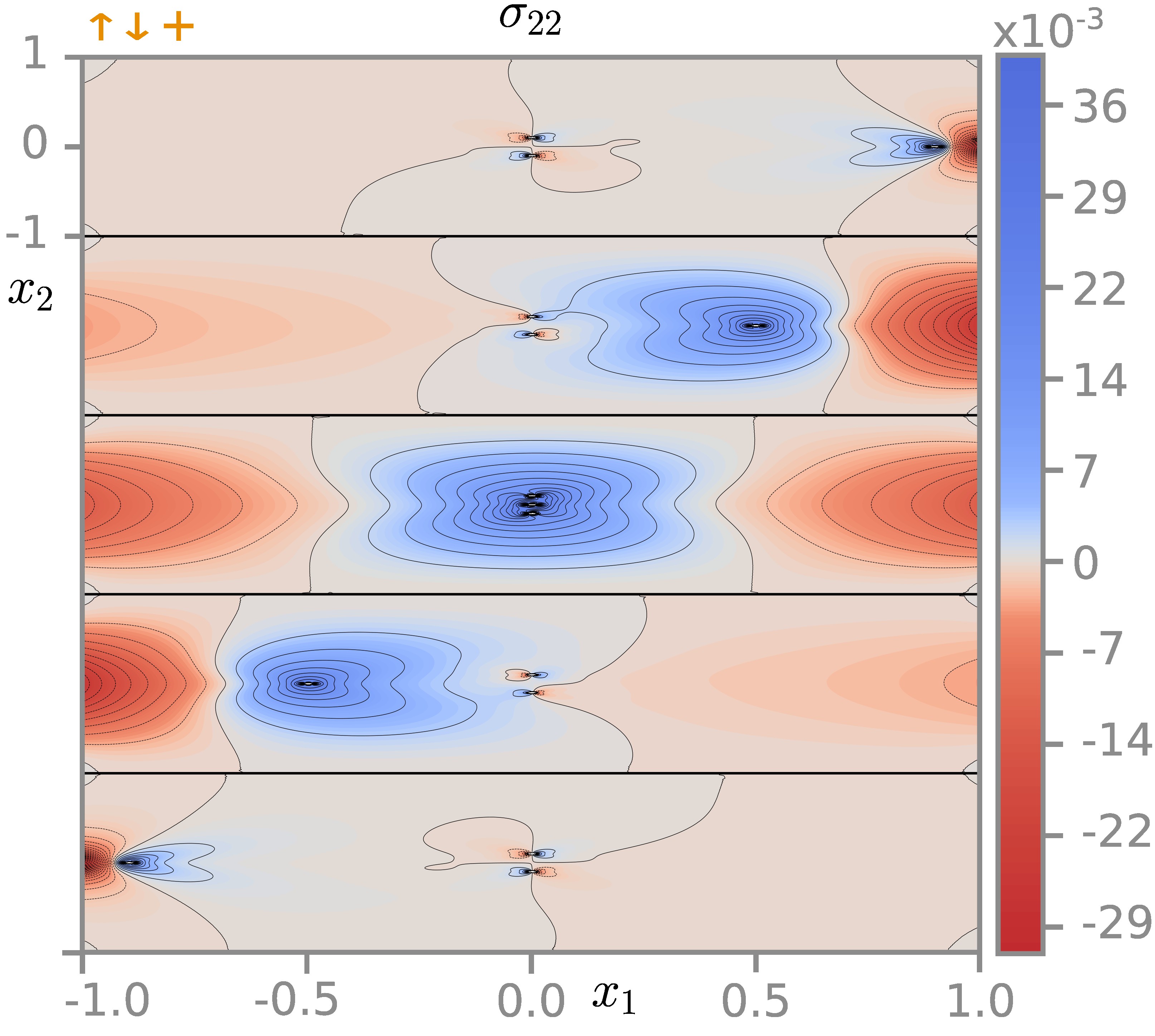}
\end{minipage}
\hfill
\begin{minipage}{0.47\textwidth}
\centering
\includegraphics[width=\linewidth]{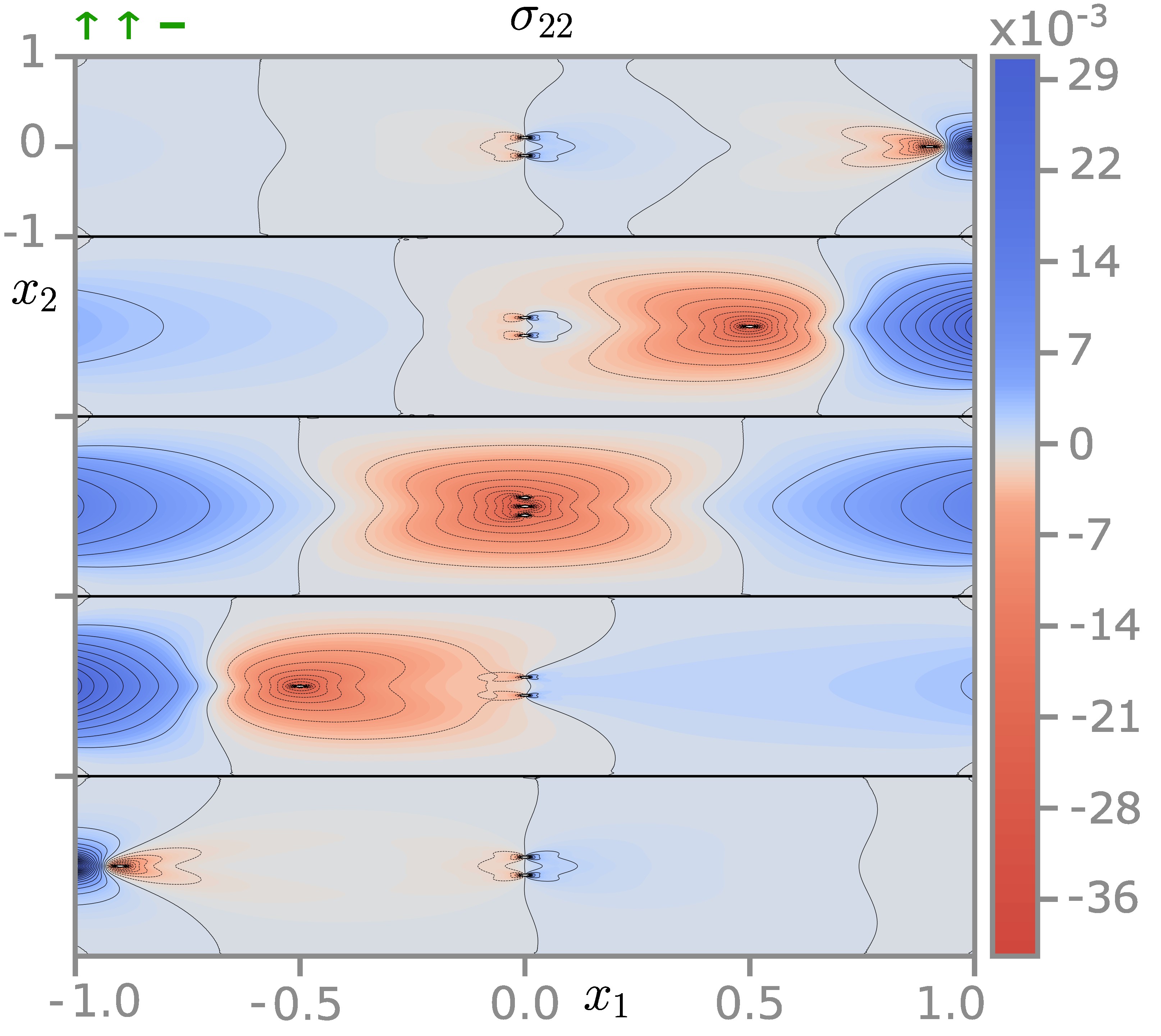}
\end{minipage}
\caption{Top--left panel: $L^2$--norm of $\sigma$ as a function of the disclination position along the $x_1$ axis. Circles mark the results of numerical experiments, while the solid line is included to guide the eye. Squares indicate maximum values. 
Other panels:   contour plots of $\sigma_{22}$ captured for five different positions of the wedge disclination, $y^{(1)}_1 = \pm 0.9$, $\pm 0.5$, $0.0$. A mesh size of $\eta = 0.02$ is used, with additional refinement in the vicinity of the cores.
}
\label{fig:202511151248}
\end{figure}

We consider a configuration with two fixed, vertically aligned dislocations and an isolated disclination whose horizontal position is varied between them.
As a result, the disclination is subject to a complex set of anisotropic and asymmetric elastic effects induced by the dislocation pair.   To analyze this behavior, we consider three distinct scenarios, each defined by the orientation of the Burgers vectors and the sign of the disclination.
The dislocations are placed along the vertical axis at the coordinates $(0,\,0.1)$ and $(0,\,-0.1)$ within the square domain $\Omega = (-1,\,1)^2$.

In Figure~\ref{fig:202511151248}, top left panel, we show the norm of the stress integrated over the associated $\Omegaeps$ as a function of the location of the wedge disclination for the three different scenarios studied.
We observe that the interaction effects are highly localized, see the inset in Figure~\ref{fig:202511151248} (top--left panel).
There exists a perfectly symmetric case (ii), in which the stress profile attains its maximum at the origin of the system, corresponding to the configuration where the disclination core is aligned with the cores of two dislocations having opposite Burgers vectors. In this case, the stress fields generated by the dislocations partially screen each other.
In the remaining two cases, where the Burgers vectors point in the same direction, the stress profiles exhibit an asymmetric behavior. Compared to these configurations, case (ii) appears overall less mechanically stressed, consistently with the partial screening induced by oppositely oriented Burgers vectors.

A common feature shared by all three configurations is that the stress magnitude decreases toward its minimum as the disclination is   closer to the boundary of the domain, further from the dislocation pair. 
On the boundary of $\Omega$, the normal component of the stress vanishes.
As expected, 
for $\sigma_{22}$ the zero trace values are achieved on the top and bottom boundaries.
However, when the disclination is located at the center of the domain or at an intermediate position, tangential stress components along the boundary   become significant.

\subsection{Interaction of disclinations with a cavity}

We study the effect of the interaction between two disclinations with opposite Frank angles, located on opposite sides of a rectangular domain containing a large cavity in the middle.
The aim is to investigate 
the screening effects of the stresses induced by the two disclinations, as influenced by the topology of the domain.
We consider the domain $\Omega = (-1,1) \times (-0.33, 0.33)$. 
The centers of the cores are placed at various positions along the horizontal axis midway through the rectangle, spanning from left to right, parametrized by $2d$, the mutual distance between the two disclinations.
A cavity is located at the center of the domain, and it is assumed to have no rotational or translational mismatch.
In our numerical framework,  this cavity is modeled as a core of varying radius~$R$, centered at $(0,0)$, with associated Frank angle $s = 0$ and Burgers vector $b = 0$.
To summarize, in this configuration we have
\[
\Omega_{\varepsilon} = \Omega \setminus \left\{ \overline{B}_R(0) \cup \overline{B}_{\varepsilon}\left(\pm d,0\right) \right\}.
\]
Results of the simulations are shown in Figure~\ref{fig:202511191032}, displaying the profiles and fields of the stresses for selected values of $R$ and $d$.
To enable comparison of the stress norms across configurations with different values of the inner radius, and consequently for different domains $\Omega_{\varepsilon}$\,, in Figure~\ref{fig:202511191032}--left we normalize the stress norm by dividing by $|\Omega_{\varepsilon}|$. 
Each curve exhibits a maximum corresponding to a critical distance between the two disclinations. For separations below this value, configurations with smaller distances are energetically favored, whereas for larger separations configurations with larger distances are energetically favored.

Remarkably, the maximum value of the normalized stresses increase as $R$ increases.
A qualitative interpretation of this phenomenon can be deduced from inspection of the stress component fields (see Figure~\ref{fig:202511191032}--right).
As $R$ increases, the screening effect of the two disclinations with opposite Frank angles progressively weakens. In the final case, for $R = 0.3$, the system behaves   as if there were two isolated disclinations in two disconnected domains.

Finally, we observe that, across different values of $R$, the stress curves exhibit the same trend for large values of $d$, that is, when the disclinations are located close to the boundaries of the domain.

\begin{figure}[h]
    \centering
    \begin{subfigure}[t]{0.5\linewidth}
        \centering
        \includegraphics[width=\linewidth]{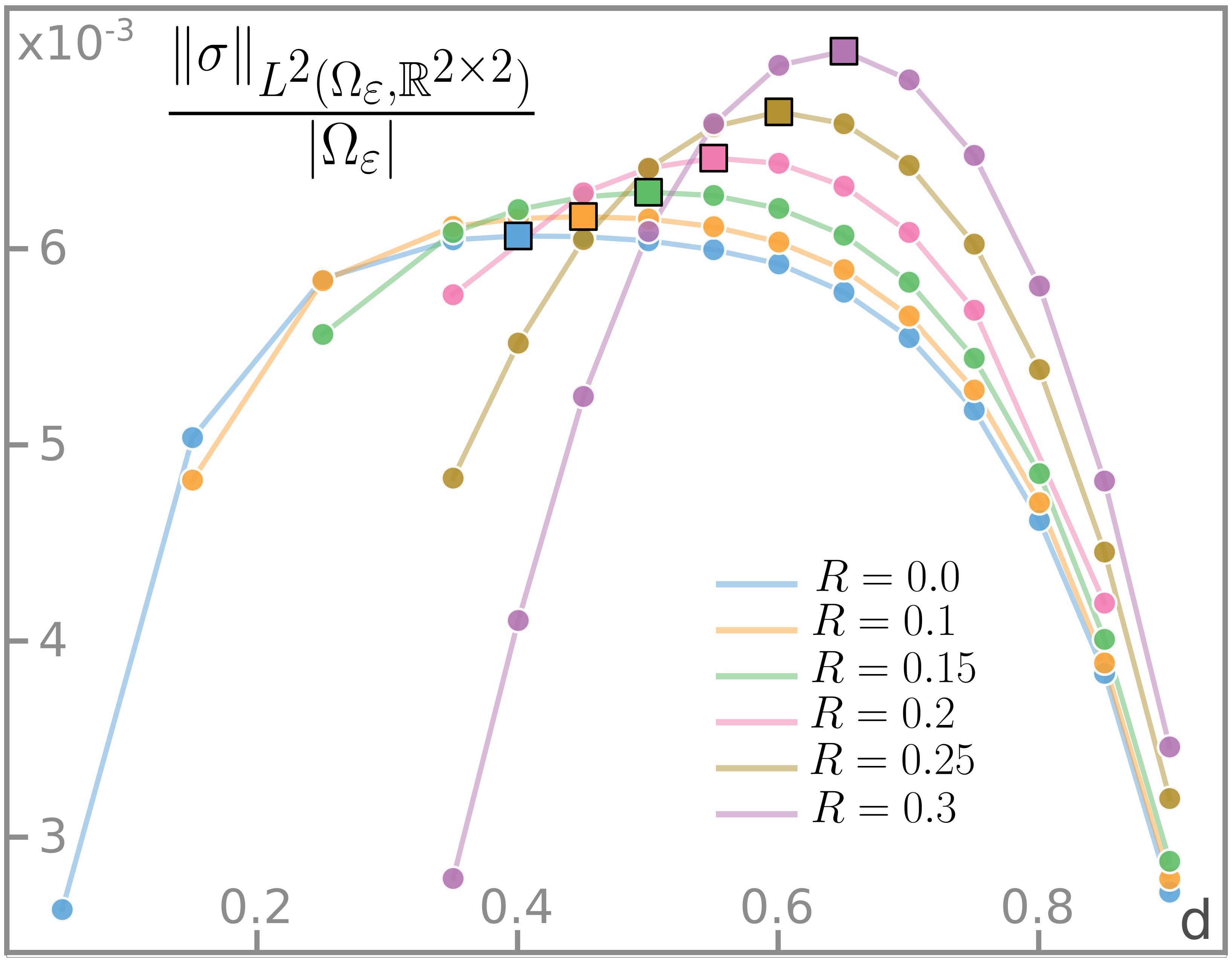}
    \end{subfigure}
\qquad
    \begin{subfigure}[t]{0.25\linewidth}
        \centering
        \includegraphics[width=\linewidth]{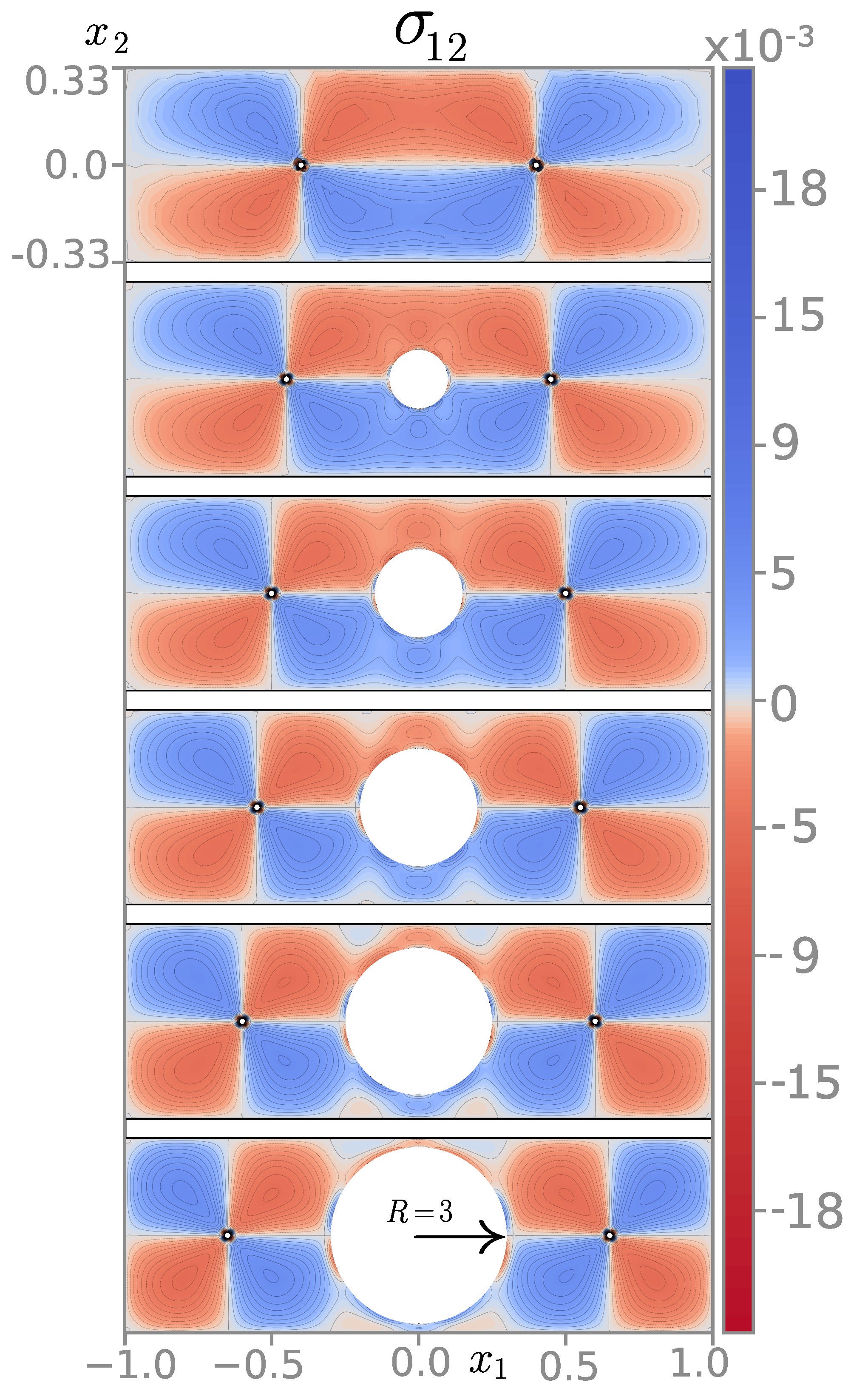}
    \end{subfigure}
    \caption{Left panel: normalized stress norm ($\lVert \sigma \rVert_{L^2(\Omega_{\varepsilon},\mathbb{R}^{2\times2})} / |\Omega_{\varepsilon}|$) for different values of the mutual distance between the two wedge disclinations and for different values of $R$. 
Squares indicate the values of \( d \) that maximize the stress norm for each \( R \).
For \( d > 0.6 \), larger values of \( R \) result in higher normalized stress.
Right panel: stress field $\sigma_{12}$ displayed for the value of   $d$ that maximizes the stress norm, indicated with squares in the left panel. 
The screening effects in the stresses, present for small values of $R$, tend to vanish as $R$ increases. A mesh size of $\eta = 0.02$ is used, with additional refinement in the vicinity of the cores.} 
    \label{fig:202511191032}
\end{figure}

\section*{Conclusion}

We developed a numerical model for resolving dislocation--disclination interactions in domains with Lipschitz-regular boundaries. We implement the finite element method for Dirichlet problems involving the bilaplacian, where dislocations and disclinations are represented through incompatibilities prescribed on $\varepsilon$-scale cores following the core-radius regularization approach. The framework is general: by solving a collection of cell problems associated with fundamental solutions for selected unitary boundary conditions, the solution for any configuration of dislocations and disclinations can subsequently be obtained through linear superposition at the postprocessing stage. 

The key advantage is that, once the cell problems are solved for a given geometry and topology of the problem, interactions between defects of any intensity, orientation, and nature (dislocations/disclinations) follow from simple postprocessing.

We validated the model on a set of configurations exhibiting nontrivial dislocation–disclination interactions, highlighting how translational and rotational incompatibilities influence the stress states of the system.
We have examined the energy response of these configurations and observed a dependence of energy minima and maxima on defects arrangement and geometry. This opens the door to the investigation of the driving forces, stability, and spatiotemporal evolution of coupled dislocation-disclination systems.

Future developments will include extensions toward nonlinear models to study realistic metallic materials under large deformations, with the goal of capturing the emergence of plasticity from the interplay of nanoscale defects and geometric incompatibilities. This includes the observation of hardening and softening behavior and the computation of full stress-strain curves. Another direction is the extension of the present two--dimensional planar strain model to plate theories, such as the von Kármán model, which requires coupling the Airy stress function with the out-of-plane deflection variable (see \cite{FABBRINI25}). 
Further extensions include the systematic study of plasticity, damage, fracture, and fragmentation, and their mutual interactions. Finally, the model can serve as a benchmark for comparison with ab-initio simulations investigating the coupled dynamics of dislocations and disclinations.

\paragraph{Conflict of interest}
The authors declare no conflict of interest.

\paragraph{Acknowledgements}
This work was started while EF was a doctoral student at the Graduate School of Mathematics, Kyushu University, with support from JST SPRING (Grant Number JPMJSP2136). It was completed during his postdoctoral appointment at the Graduate School of Science, Kyoto University, where he is partially supported by JST Moonshot R\&D (Grant Number JPMJMS2021).
PC’s work is supported by JSPS KAKENHI Grant--in--Aid for Scientific Research (C) JP24K06797 and  by JST A--STEP (Grant Number JPMJTR24T6). 
MM acknowledges partial support from the MUR grant Geometric Analytic Methods for PDEs and Applications (2022SLTHCE cup E53D23005880006). This manuscript reflects only the authors’ views and opinions and the Italian Ministry cannot be considered responsible for them.

\noindent PC holds an honorary appointment at La Trobe University. 
PC and MM are members of the Gruppo Nazionale per l’Analisi Matematica, la Probabilità e le loro Applicazioni (GNAMPA) of the Istituto Nazionale di Alta Matematica (INdAM). 
MM thanks the Institute of Mathematics for Industry, an International Joint Usage and Research Center located in Kyushu University; PC and EF thank the Department of Mathematical Sciences ``G.~L.~Lagrange'' of Politecnico di Torino where part of the work contained in this paper was carried out.

\appendix
\section{Technical results}
In this appendix we collect a few technical results that are needed in the proof of Theorem~\ref{2502282100}.

\subsection{Equivalence of boundary conditions for piecewise-$C^2$ Lipschitz domains}

Here we show that if $A$ is a piecewise--$C^2$ domain according to Definition~\ref{def_Omega} and $v \in C^2(\overline{A})$, then the boundary condition $\nabla^2v\,t = 0$ on $\partial A$ is equivalent to requiring that $v|_{\Gamma}$ be the trace of an affine function on every connected component $\Gamma$ of $\partial A$. 

\begin{proposition}\label{prop202510111410}
Let $A \subset \mathbb{R}^2$ be an open and bounded set as in Definition \ref{def_Omega} and let $v\in C^2(\overline{A})$. 
Then for every connected component $\Gamma_q$ ($q=1,\ldots,Q$, $Q\geq1$) of $\partial A$ we have that
\begin{equation}
\nabla^2v t= 0\quad \text{on $\Gamma_q$} 
\qquad\Leftrightarrow \qquad 
v= a_q, \quad \partial_n v= \partial_n a_q \quad\text{on $\Gamma_q$,}
\end{equation}
for some affine functions $a_q$\,, $q=1,\ldots,Q$.
\end{proposition}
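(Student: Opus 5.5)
The key observation is that, for $v\in C^2(\overline{A})$ and any $C^1$ arc $\gamma$ of $\partial A$ with unit tangent $t$, the chain rule gives
\[
\frac{\ud}{\ud s}\nabla v(\gamma(s)) = \nabla^2 v(\gamma(s))\,t(s).
\]
So the condition $\nabla^2 v\,t=0$ on an arc says exactly that $\nabla v$ is constant along it. I would therefore work with the pair $(v,\nabla v)$ on each connected component $\Gamma_q$ of $\partial A$ rather than with $(v,\partial_n v)$. The two pairs are equivalent on $C^1$ pieces, since $\nabla v=\partial_n v\,n+\partial_t v\,t$ and $\partial_t v$ is determined by $v|_{\Gamma_q}$.

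\emph{($\Rightarrow$).} Fix a component $\Gamma_q$. It is a closed Lipschitz curve made of finitely many $C^2$ portions $\Gamma^\ell=[P^\ell,P^{\ell+1}]$. On the relative interior of each $\Gamma^\ell$, the identity above gives $\nabla v\equiv c_\ell$ for some constant vector $c_\ell$.

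The step I expect to need care is gluing across corners $P^\ell$. There the tangent jumps, so one cannot differentiate along the boundary through the corner. However, $\nabla v$ is continuous on $\overline{A}$ because $v\in C^2(\overline{A})$. Taking limits from both adjacent portions therefore gives $c_{\ell-1}=\nabla v(P^\ell)=c_\ell$. Since $\Gamma_q$ is connected, I conclude $\nabla v\equiv c$ on all of $\Gamma_q$.

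Next, $v$ is continuous on $\Gamma_q$ and $C^1$ along each arc. Along each arc,
\[
\frac{\ud}{\ud s}\big(v(\gamma(s))-c\cdot\gamma(s)\big)=\nabla v\cdot t-c\cdot t=0.
\]
Hence $v-c\cdot x$ is constant on each arc, and by continuity at the corners it is constant, say equal to $d$, on all of $\Gamma_q$. Setting $a_q(x)\coloneqq d+c\cdot x$ gives $v=a_q$ on $\Gamma_q$ and $\nabla v=\nabla a_q$ there. In particular $\partial_n v=\partial_n a_q$ wherever $n$ is defined, which is $\mathcal H^1$-a.e.

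\emph{($\Leftarrow$).} Suppose $v=a_q$ and $\partial_n v=\partial_n a_q$ on $\Gamma_q$. On each $C^2$ arc, differentiating $v=a_q$ tangentially gives $\partial_t v=\partial_t a_q$. Combined with the normal condition, this yields $\nabla v=\nabla a_q=c$ on the relative interior of each arc.

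Differentiating again along the arc gives $\nabla^2 v\,t=\frac{\ud}{\ud s}\nabla v=0$ there. Since $\nabla^2 v$ is continuous on $\overline{A}$, this holds at interior points of the arcs. At the corners $t$ is undefined, or is taken as either one-sided tangent; in the latter case the one-sided limits of $\nabla^2 v(\gamma(s))\,t(s)$ vanish by continuity of $\nabla^2 v$. So the condition holds wherever it makes sense.

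The only delicate point throughout is the corner gluing, which rests entirely on the global $C^2(\overline{A})$ regularity of $v$ (it makes $\nabla v$ and $\nabla^2 v$ continuous up to the vertices), together with connectedness of each $\Gamma_q$.
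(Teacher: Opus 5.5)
Your proof is correct, but it takes a different, more elementary route than the paper.

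\textbf{The paper's route.} The paper does not differentiate along the boundary itself. It applies \cite[Proposition~A.2]{Cesana2024a} on each $C^2$ portion $\Gamma^\ell_q$, which yields a possibly different affine function $a^\ell_q$ on each portion. It then glues at the corners using the Geymonat--Krasucki characterization of $H^2$ traces on Lipschitz domains \cite[Theorem~3]{GK2000}, namely $(\partial_t g_{q,0})n-g_{q,1}t\in H^{1/2}(\Gamma_q)$. This quantity is exactly $\Pi\nabla v=(\partial_2 v,-\partial_1 v)$. Since it is piecewise constant, the $H^{1/2}$ condition rules out jumps. That gives \eqref{cont_grad}, i.e.\ $\nabla a^\ell_q=\nabla a^{\ell+1}_q$, and continuity of $v$ at the vertices then matches the constants.

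\textbf{Comparison.} Both arguments glue the same object, the gradient up to a fixed rotation, and finish the same way. They differ only in how continuity of $\nabla v$ at the vertices is justified. You get it for free from $v\in C^2(\overline A)$, and you get constancy on each arc directly from the chain rule. As a result, your proof:
\begin{itemize}
\item is self-contained, needing neither the cited smooth-boundary proposition nor trace theory;
\item writes out both implications (the paper only spells out the forward one);
\item covers the single-arc case $L(q)=1$ without the fictitious-point device.
\end{itemize}
Under the stated $C^2(\overline A)$ hypothesis, the paper's trace-theoretic gluing is heavier than necessary. It is, however, the natural tool if one wants to weaken the regularity of $v$ near the vertices to $H^2$, where pointwise continuity of $\nabla v$ is no longer available.

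\textbf{One implicit step.} You should justify that $\frac{\ud}{\ud s}\nabla v(\gamma(s))=\nabla^2 v(\gamma(s))\,t(s)$ holds at boundary points. This is immediate if $C^2(\overline A)$ is read as restrictions of $C^2$ functions defined on a neighbourhood of $\overline A$. Otherwise it follows from a short approximation by curves inside $A$ together with uniform continuity of the derivatives.
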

\begin{proof}
Let $\Gamma_q$ be a connected component of $\partial A$.
On each curve $\Gamma^\ell_q$ ($\ell=1,\ldots,L(Q)$), apply \cite[Proposition A.2]{Cesana2024a} to obtain that there exist $L(q)$ affine functions (and therefore $3L(q)$ real parameters)
$$\Gamma^\ell_q\ni x^\ell_q\mapsto a_q^\ell(x_q^\ell)=a_{q,1}^\ell x_1^\ell+a_{q,2}^\ell x_2^\ell +a_{q,0}^\ell$$ 
such that $v|_{\Gamma^\ell_q}(x^\ell_q)=a_q^\ell(x_q^\ell)$, for every $\ell=1,\ldots,L(q)$. 

Let us denote by $g_{q,0}$ and $g_{q,1}$ the traces of $v$ and of its normal derivative $\partial_n v$, respectively, on $\Gamma_q$\,.
Therefore, we can apply \cite[Theorem 3]{GK2000} which yields that the condition
\begin{equation}\label{GK_condition}(\partial_t g_{q,0})n - g_{q,1} t \in H^{1/2}(\Gamma_q)
\end{equation}
must be satisfied.
Besides implying continuity, \eqref{GK_condition} implies that there are no jumps at the corners $P_q^\ell$ of the boundary.

Eventually, this implies that the trace $g_{q,0}$ at the boundary is \emph{only one} affine function, \emph{i.e.}, $a_q^\ell=a_q^1$ for every $\ell=2,\ldots,L(q)$. 
To see this, we impose the continuity of \eqref{GK_condition} and of $g_{q,0}$ at the $P_q^\ell$'s.
By our positions, given $\ell$, the point $P_q^\ell$ is the end point of $\Gamma_q^\ell$ and the starting point of $\Gamma_q^{\ell+1}$.
Let us denote by $t_q^{\ell}=(t_{q,1}^\ell,t_{q,2}^\ell)^\top$ and by $n_q^\ell=(n_{q,1}^\ell,n_{q,2}^\ell)^\top=(t_{q,2}^\ell,-t_{q,1}^\ell)^\top$ the tangent and normal vectors, respectively.
The continuity of \eqref{GK_condition} across $P_q^\ell$ reads
$$(\langle \nabla a_q^\ell,t_q^\ell\rangle)n_q^\ell-(\langle\nabla a_q^\ell,n_q^\ell\rangle)t_q^\ell = (\langle \nabla a_q^{\ell+1},t_q^{\ell+1}\rangle)n_q^{\ell+1}-(\langle\nabla a_q^{\ell+1},n_q^{\ell+1}\rangle)t_q^{\ell+1}\,$$
which, in components, reads
\begin{equation}\label{cont_grad}
\begin{pmatrix}
a_{q,2}^\ell\\
-a_{q,1}^\ell
\end{pmatrix} = 
\begin{pmatrix}
a_{q,2}^{\ell+1}\\
-a_{q,1}^{\ell+1}
\end{pmatrix},\qquad\text{for every $\ell=1,\ldots,L(q)$,}
\end{equation}
where we have used that $\lVert t_q^\ell\rVert=1$ for every $\ell=1,\ldots,L(q)$; this fixes $2L(q)$ constraints.
The continuity equations of $g_{q,0}$ at each $P_q^\ell=(X_{q,1}^\ell,X_{q,2}^\ell)$ reads
$$a_{q,1}^{\ell} X_{q,1}^{\ell} + a_{q,2}^{\ell} X_{q,2}^{\ell} + a_{q,0}^{\ell} = a_{q,1}^{\ell+1} X_{q,1}^{\ell} + a_{q,2}^{\ell+1} X_{q,2}^{\ell} + a_{q,0}^{\ell+1},\qquad \text{for every $\ell=1,\ldots,L(q)$,}$$
which fixes $L(q)$ constraints and, by virtue of \eqref{cont_grad}, implies that $a_{q,0}^\ell=a_{q,0}^1$ for every $\ell=2,\ldots,L(q)$.

The special case $L(q)=1$ can be treated by adding a fictitious point $P_q^2$ and repeating the argument above with $L(q)=2$.
\end{proof}

\subsection{A symmetry property of the Monge-Amp\`{e}re operator} 
In this section we recall the definition of the Monge--Amp\`{e}re operator and extend   \cite[Lemma C.2]{CFM2025} to the case of $\partial \Omega$ Lipschitz and functions with $H^2$ regularity.

\begin{definition}[Monge--Amp\`{e}re operator]\label{sec:24241126922}
Let $\Omega\subset\mathbb{R}^2$ be an open set, for any $\xi,\eta \in H^2(\Omega)$, the Monge--Amp\`{e}re operator is defined as
\begin{equation}
\label{eq:ma}
[\xi, \eta](x)\coloneqq \cof(\nabla^2 \xi(x)) : \nabla^2 \eta(x) \qquad \text{for a.e.~$x \in \Omega$.}
\end{equation}
\end{definition}

We now present Lemma \ref{2909251140} establishing an important property for the following trilinear form
\begin{equation}\label{eq_20251114}
H^2(\Omega) \times H^2(\Omega) \times H^2(\Omega) \ni (\psi, \eta, \chi) \mapsto \int_{\Omega} [\psi, \eta](x)\chi(x)\,\ud x.
\end{equation}

\begin{lemma}\label{2909251140}
Let $\Omega$ be as in Definition~\ref{def_Omega} and let $\Omega_\varepsilon$ be defined as in \eqref{eq_Omega_eps}.
Let $\eta, \chi, \psi \in H^2(\Omega_\varepsilon)$ with $\psi$ such that $\psi = \partial_n \psi = 0$ on $\partial \Omega$, and $\psi$ has affine trace on each $\partial B_{\varepsilon}^i$\,, for $i \in \{1, \hdots, N\}$. 
Then
\begin{equation}\label{eq_C3}
\int_{\Omegaeps} [\psi, \eta](x)\chi(x)\,\ud x= 
\int_{\Omegaeps} [\psi, \chi](x)\eta(x)\,\ud x.
\end{equation}
\end{lemma}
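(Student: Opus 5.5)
Since $\cof(\nabla^2\psi)$ is divergence-free row by row, we have the identity $[\psi,\eta] = \Div\big(\cof(\nabla^2\psi)\nabla\eta\big)$. Using it, I would integrate by parts twice: first move one derivative from $\eta$ onto $\chi$, then move the second derivative off $\eta$. This produces the symmetric expression $\int \cof(\nabla^2\psi)\nabla\eta\cdot\nabla\chi$ plus boundary terms. The plan is to show that all boundary terms vanish, by exploiting the boundary conditions on $\psi$.

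The first step is to argue for smooth functions and then conclude by density. I would take $\eta,\chi\in C^\infty(\overline{\Omega_\varepsilon})$, which is dense in $H^2(\Omega_\varepsilon)$ because $\Omega_\varepsilon$ is Lipschitz. The trilinear form \eqref{eq_20251114} is continuous on $H^2\times H^2\times H^2$: indeed $[\psi,\eta]\in L^1$ and $\chi\in L^\infty$ by the Sobolev embedding. So it suffices to prove \eqref{eq_C3} for smooth $\eta,\chi$ while keeping $\psi\in H^2$. Since $\psi$ cannot be smoothed without destroying its boundary conditions, I would instead use the distributional identity $\Div\cof(\nabla^2\psi)=0$, which holds in $H^{-1}$ for $\psi\in H^2$. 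For smooth $\eta,\chi$ the Green formula on the Lipschitz domain then gives
\[
\int_{\Omega_\varepsilon}[\psi,\eta]\chi\,\ud x=\int_{\partial\Omega_\varepsilon}\chi\,\cof(\nabla^2\psi)\nabla\eta\cdot n\,\ud\Huno-\int_{\Omega_\varepsilon}\cof(\nabla^2\psi)\nabla\eta\cdot\nabla\chi\,\ud x .
\]
The volume term is symmetric in $(\eta,\chi)$, because $\cof(\nabla^2\psi)$ is a symmetric matrix. It therefore remains to show that the boundary term $\int_{\partial\Omega_\varepsilon}\chi\,\cof(\nabla^2\psi)n\cdot\nabla\eta$ is also symmetric, or in fact vanishes. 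I would write $\cof(\nabla^2\psi)n$ in terms of tangential derivatives. In two dimensions, $\cof(\nabla^2\psi)n = -\partial_t(\nabla\psi)^\perp$ up to sign conventions, so the boundary integrand reads $\chi\,\partial_t\big((\nabla\psi)^\perp\big)\cdot\nabla\eta$.

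On $\partial\Omega$ we have $\psi=\partial_n\psi=0$, and on each piece $\Gamma^\ell$ (piecewise $C^2$) this yields $\nabla\psi=0$ on $\partial\Omega$. On each $\partial B_\varepsilon^i$ the trace of $\psi$ is affine. I read this as meaning that $\psi$ and $\partial_n\psi$ agree with those of an affine function $a^i$, as in $\scrC(\Omega_\varepsilon)$, so that $\nabla\psi=\nabla a^i$ is constant on the circle. In both cases $\nabla\psi$ is constant along each boundary component, so its tangential derivative vanishes. To make this rigorous for $H^2$ functions, I would integrate by parts along the boundary curve: the term $\int\chi\nabla\eta\cdot\partial_t(\nabla\psi)^\perp$ becomes $-\int (\nabla\psi)^\perp\cdot\partial_t(\chi\nabla\eta)$, up to corner contributions. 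I would then replace $\nabla\psi$ by the constant $c$ to get $-c^\perp\cdot\int_{\Gamma_q}\partial_t(\chi\nabla\eta)\,\ud\Huno=0$ on each closed curve $\Gamma_q$. Equivalently, the whole boundary term can be handled by the approximation below.

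The main obstacle is justifying this boundary computation with only $H^2$ regularity on the Lipschitz (cornered) outer boundary, where $\cof(\nabla^2\psi)n$ is only defined in a weak sense. To avoid traces of second derivatives, I would instead write $\psi=\tilde\psi+\sum_i \theta_i a^i$. Here $\theta_i$ are smooth cutoffs equal to $1$ near $\overline B_\varepsilon^i$ and supported away from the other holes and from $\partial\Omega$, and $\tilde\psi$ is obtained by extending $\psi-\sum_i\theta_i a^i$ by zero across all boundaries; then $\tilde\psi\in H^2_0$ of a larger domain containing the holes. For $\tilde\psi$, density of $C_c^\infty$ in $H^2_0$ gives \eqref{eq_C3} directly, because the boundary terms vanish for compactly supported smooth approximants. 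For $\theta_i a^i$, the function is smooth, its Hessian vanishes near the holes, and it is compactly supported in $\Omega$, so the identity follows from the smooth computation. Summing both pieces by linearity gives \eqref{eq_C3}.
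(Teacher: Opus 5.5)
Your final argument, the decomposition $\psi=\tilde\psi+\sum_i\theta_i a^i$, is correct. It follows the same overall strategy as the paper: approximate $\psi$ by smooth functions with the right boundary structure, integrate by parts once to reach the symmetric term $-\int_{\Omega_\varepsilon}\langle\cof(\nabla^2\psi)\nabla\chi,\nabla\eta\rangle\,\ud x$, and pass to the limit using continuity of the trilinear form in $\psi$ (with $\chi\in L^\infty$).

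The difference is in how the approximants are obtained and how the terms on the circles are removed. The paper simply asserts that there exist $\psi^m\in C^\infty_c(\Omega)$, affine on each $\partial B^i_\varepsilon$, converging to $\psi$ in $H^2(\Omega_\varepsilon)$. It then kills $\int_{\partial B_\varepsilon^i}\chi\langle\cof(\nabla^2\psi^m)n,\nabla\eta\rangle\,\ud\Huno$ using \cite[Proposition~A.2]{Cesana2024a}, which gives $\nabla^2\psi^m t=0$, together with $\cof(M)=\Pi^\top M\Pi$ and $\Pi n=-t$. Your decomposition actually constructs such a sequence: take $\psi^m=\phi^m+\sum_i\theta_i a^i$ with $\phi^m\to\tilde\psi$, extended by $a^i$ inside $B^i_\varepsilon$. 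Since $\nabla^2\psi^m$ vanishes identically near every circle and $\psi^m$ vanishes near $\partial\Omega$, $\cof(\nabla^2\psi^m)$ is compactly supported in $\Omega_\varepsilon$. So no boundary terms arise at all, and Proposition~A.2 is not needed. Your reading of ``affine trace'' as matching both $\psi$ and $\partial_n\psi$ with those of $a^i$ is the one the lemma requires.

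One point must be stated precisely. The approximants $\phi^m$ of $\tilde\psi$ must lie in $C^\infty_c(\Omega_\varepsilon)$, so what you need is $\tilde\psi\in H^2_0(\Omega_\varepsilon)$, not merely $H^2_0$ of a larger domain containing the holes. Approximants that do not vanish near $\partial B^i_\varepsilon$ would reintroduce exactly the circle terms you are avoiding. The fact holds because $\tilde\psi$ and $\nabla\tilde\psi$ have zero trace on all of the Lipschitz boundary $\partial\Omega_\varepsilon$, which characterizes $H^2_0(\Omega_\varepsilon)$. Equivalently, the zero extension of $\tilde\psi$ lies in $H^2(\mathbb{R}^2)$ with support in $\overline{\Omega_\varepsilon}$. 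With this, your preliminary reduction to smooth $\eta,\chi$ and the tangential integration by parts with corner contributions are superseded and can be dropped.
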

\begin{proof} 
Consider a sequence of functions $\psi^m \in C^{\infty}_c(\Omega)$ approximating $\psi$ in the $H^2(\Omega_\varepsilon)$--norm and such that 
$\psi^m$ are affine on every $\partial B_{\varepsilon}^{i}$\,. 
By the definition of the Monge--Amp\`{e}re operator \eqref{eq:ma} and using integration by parts, one obtains
\begin{equation}
\begin{aligned}
\int_{\Omega_\varepsilon} [\psi^{m}, \eta] \, \chi \, \ud x & = \int_{\Omega_\varepsilon} \chi \, \cof(\nabla^2 \psi^{m}) : \nabla^2 \eta \, \ud x \\
&= \int_{\partial \Omega_\varepsilon} \chi \, \langle \cof(\nabla^2 \psi^{m})n, \nabla \eta\rangle \,\ud\Huno - \int_{\Omega_\varepsilon} \big\langle \Div \big(\chi \, \cof(\nabla^2 \psi^{m})\big), \nabla \eta\big\rangle \,\ud x  \\
&= \sum_{i=1}^N \int_{\partial B_{\varepsilon}(\xi^i)}  \chi \, \langle \cof(\nabla^2 \psi^{m})n, \nabla \eta \rangle \, \ud \Huno - \int_{\Omega_\varepsilon} \big\langle \Div \big(\chi \, \cof(\nabla^2 \psi^{m})\big), \nabla \eta\big\rangle \,\ud x .
\end{aligned}
\end{equation}
The integrals under the summation symbol are individually zero: this follows from \cite[Proposition~A.2]{Cesana2024a} applied to $\psi^{m}$, using the identity $\cof(M)=\Pi^\top M\Pi$ for any $M\in\mathbb{R}^{2\times2}_{\sym}$ together with the relation $\Pi n=-t$.
We are therefore left with
\begin{equation}
\begin{aligned}
\label{eq:202509291410}
\int_{\Omega_\varepsilon} [\psi^{m}, \eta] \, \chi \, \ud x
&= -\int_{\Omega_\varepsilon} \big\langle \Div \big(\chi \, \cof(\nabla^2 \psi^{m})\big), \nabla \eta\big\rangle \,\ud x \\
&= - \int_{\Omega_\varepsilon} \big\langle  \cof(\nabla^2 \psi^{m}) \nabla \chi, \nabla \eta\big\rangle \,\ud x - \int_{\Omega_\varepsilon} \big\langle  \Div (\cof(\nabla^2 \psi^{m})), \nabla \eta\big\rangle \chi \,\ud x \\
&= - \int_{\Omega_\varepsilon} \big\langle  \cof(\nabla^2 \psi^{m}) \nabla \chi, \nabla \eta\big\rangle \,\ud x ,
\end{aligned}
\end{equation}
since a straightforward computation shows that $\Div(\cof(\nabla^2 \psi^{m})) = 0$ pointwise in $\Omega_\varepsilon$\,.  
The last expression in \eqref{eq:202509291410} is symmetric in $\eta$ and $\chi$, and so is the first one. We have thus shown that 
\begin{equation}
\label{eq:202509291411}
\int_{\Omega_\varepsilon} [\psi^{m}, \eta] \, \chi \, \ud x
= \int_{\Omega_\varepsilon} [\psi^{m}, \chi] \, \eta \, \ud x .
\end{equation}
Finally, passing to the limit as $m \to \infty$ yields   \eqref{eq_C3}.
\end{proof}

\bibliographystyle{elsarticle-num}
\bibliography{refsPatrick.bib}

@article{AlicandroDeLucaPalombaroPonsiglione2025,
	author = {Alicandro, Roberto and De Luca, Lucia and Palombaro, Mariapia and Ponsiglione, Marcello},
	doi = {10.1515/acv-2023-0053},
	fjournal = {Advances in Calculus of Variations},
	issn = {1864-8258,1864-8266},
	journal = {Adv. Calc. Var.},
	mrclass = {74C05 (49J45 70G75 74B20)},
	mrnumber = {4845983},
	number = {1},
	pages = {1--23},
	title = {{$\Gamma $}-convergence analysis of the nonlinear self-energy induced by edge dislocations in semi-discrete and discrete models in two dimensions},
	volume = {18},
	year = {2025}}

@article{FABBRINI25,
	author = {Edoardo Fabbrini and Andr{\'e}s A. {Le{\'o}n Baldelli} and Pierluigi Cesana},
	doi = {https://doi.org/10.1016/j.apm.2025.116234},
	issn = {0307-904X},
	journal = {Applied Mathematical Modelling},
	pages = {116234},
	title = {Kinematically incompatible {F}{\"o}ppl--von {K}{\'a}rm{\'a}n plates: Analysis and numerics},
	volume = {148},
	year = {2025}}

@article{ZHANG18,
	author = {Chiqun Zhang and Amit Acharya and Saurabh Puri},
	doi = {https://doi.org/10.1016/j.jmps.2018.02.004},
	issn = {0022-5096},
	journal = {Journal of the Mechanics and Physics of Solids},
	pages = {258-302},
	title = {Finite element approximation of the fields of bulk and interfacial line defects},
	volume = {114},
	year = {2018}}

@article{Xu97,
	author = {G. Xu and A. S. Argon and M. Ortiz},
	doi = {10.1080/01418619708205146},
	journal = {Philosophical Magazine A},
	number = {2},
	pages = {341--367},
	publisher = {Taylor \& Francis},
	title = {Critical configurations for dislocation nucleation from crack tips},
	volume = {75},
	year = {1997}}

@article{Xu95,
	author = {G. Xu and A. S. Argon and M. Ortiz},
	doi = {10.1080/01418619508239933},
	journal = {Philosophical Magazine A},
	number = {2},
	pages = {415--451},
	publisher = {Taylor \& Francis},
	title = {Nucleation of dislocations from crack tips under mixed modes of loading: Implications for brittle against ductile behaviour of crystals},
	volume = {72},
	year = {1995}}

@book{Sadd25,
	author = {Martin H. Sadd},
	edition = {5},
	isbn = {9780443132452},
	publisher = {Elsevier},
	title = {Elasticity: Theory, Applications, and Numerics},
	year = {2025},
    doi = {10.1016/C2022-0-02029-8}}

@article{Zdzisaw99,
	author = {Zdzis{\l}aw Wi{\k{e}}ckowski and Sang Kook Youn and Bong Soo Moon},
	doi = {10.1002/(SICI)1097-0207(19990410)44:10<1505::AID-NME555>3.0.CO;2-G},
	journal = {International Journal for Numerical Methods in Engineering},
	number = {10},
	pages = {1505--1525},
	publisher = {John Wiley & Sons},
	title = {Stress‐based finite element analysis of plane plasticity problems},
	volume = {44},
	year = {1999}}

@article{Kundin11,
	author = {Julia Kundin and Heike Emmerich and Johannes Zimmer},
	doi = {10.1080/14786435.2010.485587},
	journal = {Philosophical Magazine},
	number = {1},
	pages = {97--121},
	publisher = {Taylor \& Francis},
	title = {Mathematical concepts for the micromechanical modelling of dislocation dynamics with a phase-field approach},
	volume = {91},
	year = {2011}}

@book{cai06,
	author = {Bulatov, Vasily and Cai, Wei},
	doi = {10.1093/oso/9780198526148.001.0001},
	isbn = {9780198526148},
	month = {11},
	publisher = {Oxford University Press},
	title = {Computer Simulations of Dislocations},
	year = {2006}}

@article{WANG10,
	author = {Yunzhi Wang and Ju Li},
	doi = {https://doi.org/10.1016/j.actamat.2009.10.041},
	issn = {1359-6454},
	journal = {Acta Materialia},
	number = {4},
	pages = {1212-1235},
	title = {Phase field modeling of defects and deformation},
	volume = {58},
	year = {2010}}

@article{WANG01,
	author = {Y.U. Wang and Y.M. Jin and A.M. Cuiti{\~n}o and A.G. Khachaturyan},
	doi = {https://doi.org/10.1016/S1359-6454(01)00075-1},
	issn = {1359-6454},
	journal = {Acta Materialia},
	number = {10},
	pages = {1847-1857},
	title = {Nanoscale phase field microelasticity theory of dislocations: model and 3D simulations},
	volume = {49},
	year = {2001}}

@article{RODNEY03,
	author = {D. Rodney and Y. {Le Bouar} and A. Finel},
	doi = {https://doi.org/10.1016/S1359-6454(01)00379-2},
	issn = {1359-6454},
	journal = {Acta Materialia},
	number = {1},
	pages = {17-30},
	title = {Phase field methods and dislocations},
	volume = {51},
	year = {2003}}

@article{CGMP2025,
	author = {Cesana, Pierluigi and Grillo, Alfio and Morandotti, Marco and Pastore, Andrea},
	doi = {10.1137/24M1688096},
	journal = {SIAM Journal on Applied Mathematics},
	number = {4},
	pages = {1361-1386},
	title = {Dissipative {D}ynamics of {V}olterra {D}isclinations},
	volume = {85},
	year = {2025}}

@article{FRESSENGEAS2020104092,
	author = {Claude Fressengeas and Xiaoyu Sun},
	doi = {https://doi.org/10.1016/j.jmps.2020.104092},
	issn = {0022-5096},
	journal = {Journal of the Mechanics and Physics of Solids},
	pages = {104092},
	title = {On the theory of dislocation and generalized disclination fields and its application to straight and stepped symmetrical tilt boundaries},
	volume = {143},
	year = {2020}}

@article{Becker21,
	author = {M. Nguyen-Hoang and W. Becker},
	doi = {https://doi.org/10.1016/j.ijsolstr.2021.03.010},
	issn = {0020-7683},
	journal = {International Journal of Solids and Structures},
	pages = {111023},
	title = {Stress analysis of finite dimensions bolted joints using the Airy stress function},
	volume = {224},
	year = {2021}}

@article{Cesana2024a,
	author = {Cesana, Pierluigi and De Luca, Lucia and Morandotti, Marco},
	journal = {SIAM Journal on Mathematical Analysis},
	month = {01},
	pages = {79-136},
	title = {Semidiscrete Modeling of Systems of Wedge Disclinations and Edge Dislocations via the Airy Stress Function Method},
	volume = {56},
	year = {2024},
    DOI = {10.1137/22M1523443}}

@article{RRK2018,
	author = {A. E. Romanov and M. A. Rozhkov and A. L. Kolesnikova},
	journal = {Letters on Materials},
	number = {4},
	pages = {384-400},
	title = {Disclinations in polycrystalline graphene and pseudo-graphenes. {R}eview},
	volume = {8},
	year = {2018},
    doi = {10.22226/2410-3535-2018-4-384-400}}

@article{KVV2016,
	author = {M. Kabir and K. J. Van Vliet},
	journal = {J. Phys. Chem. C},
	number = {3},
	pages = {1989-1993},
	title = {Kinetics of {T}opological {S}tone--{W}ales {D}efect {F}ormation in {S}ingle-{W}alled {C}arbon {N}anotubes},
	volume = {120},
	year = {2016},
    doi = {10.1021/acs.jpcc.5b11682}}

@article{DNPV2012,
	author = {Di Nezza, Eleonora and Palatucci, Giampiero and Valdinoci, Enrico},
	doi = {10.1016/j.bulsci.2011.12.004},
	fjournal = {Bulletin des Sciences Math\'ematiques},
	issn = {0007-4497,1952-4773},
	journal = {Bull. Sci. Math.},
	mrclass = {46E35 (35A23 35S05 35S30)},
	mrnumber = {2944369},
	mrreviewer = {Lanzhe\ Liu},
	number = {5},
	pages = {521--573},
	title = {Hitchhiker's guide to the fractional {S}obolev spaces},
	volume = {136},
	year = {2012}}

@manual{petsc-tao-users-manual,
	author = {S. Balay and S. Abhyankar and others},
	institution = {Argonne National Laboratory},
	title = {{PETSc/TAO Users Manual}},
	year = {2024}}

@book{Z97,
	author = {Zubov, L. M.},
	publisher = {Springer},
	series = {Lecture Notes in Physics Monographs},
	title = {Nonlinear {T}heory of {D}islocations and {D}isclinations in {E}lastic {B}odies},
	year = {1997}}

@article{PolyharmonicGreenFunction,
	article-number = {543},
	author = {Karachik, V.},
	doi = {10.3390/axioms12060543},
	issn = {2075-1680},
	journal = {Axioms},
	title = {On {Green's} {F}unction of the {D}irichlet {P}roblem for the {P}olyharmonic {E}quation in the {B}all},
	volume = {12},
	year = {2023}}

@article{SN88,
	author = {Seung, H. S. and Nelson, D. R.},
	journal = {Phys. Rev. A},
	pages = {1005--1018},
	title = {Defects in flexible membranes with crystalline order},
	volume = {38},
	year = {1988},
    doi = {10.1103/PhysRevA.38.1005}}

@misc{BarattaEtal2023,
	author = {Baratta, I. A. and Dean, J. P. and others},
	doi = {10.5281/zenodo.10447666},
	howpublished = {preprint},
	title = {{DOLFINx}: the next generation {FEniCS} problem solving environment},
	year = {2023}}

@article{Brenner:Von_Karman,
	address = {Berlin, Heidelberg},
	author = {Brenner, S. C. and Neilan, M. and others},
	doi = {10.1007/s00211-016-0817-y},
	issn = {0029-599X},
	issue_date = {March 2017},
	journal = {Numer. Math.},
	numpages = {30},
	pages = {803--832},
	publisher = {Springer-Verlag},
	title = {A {$C^{0}$} interior penalty method for a {V}on {K{\'a}rm{\'a}n} plate},
	volume = {135},
	year = {2017}}

@article{CPL14,
	author = {Cesana, P. and Porta, M. and Lookman, T.},
	journal = {J. of the Mech. and Phys. of Solids},
	pages = {174--192},
	title = {Asymptotic analysis of hierarchical martensitic microstructure},
	volume = {72},
	year = {2014},
    DOI = {10.1016/j.jmps.2014.08.001}}

@article{GK2000,
	author = {Geymonat, Giuseppe and Krasucki, Fran{\c c}oise},
	doi = {10.1016/S0764-4442(00)00196-8},
	fjournal = {Comptes Rendus de l'Acad\'emie des Sciences. S\'erie I. Math\'ematique},
	issn = {0764-4442},
	journal = {C. R. Acad. Sci. Paris S\'er. I Math.},
	mrclass = {65N30 (35Q30)},
	mrnumber = {1751670},
	number = {5},
	pages = {355--360},
	title = {On the existence of the {A}iry function in {L}ipschitz domains. {A}pplication to the traces of {$H^2$}},
	volume = {330},
	year = {2000}}

@article{PETROLO20042471,
	author = {Antonio Salvatore Petrolo and Raffaele Casciaro},
	doi = {https://doi.org/10.1016/j.compstruc.2004.07.004},
	issn = {0045-7949},
	journal = {Computers \& Structures},
	number = {29},
	pages = {2471-2481},
	title = {3{D} beam element based on {S}aint {V}\'{e}nant's rod theory},
	volume = {82},
	year = {2004}}

@article{vG2017,
	author = {Van Goethem, Nicolas},
	doi = {10.1177/1081286516642817},
	journal = {Mathematics and Mechanics of Solids},
	number = {8},
	pages = {1688-1695},
	title = {Incompatibility-governed singularities in linear elasticity with dislocations},
	volume = {22},
	year = {2017}}

@article{vaGoethemDupret2012,
	author = {Van Goethem, Nicolas and Dupret, Fran\c{c}ois},
	doi = {10.1017/S0956792512000010},
	journal = {European Journal of Applied Mathematics},
	number = {3},
	pages = {417--439},
	title = {A distributional approach to 2D {V}olterra dislocations at the continuum scale},
	volume = {23},
	year = {2012}}

@article{Angoshtari2016,
	author = {Angoshtari, Arzhang and Yavari, Arash},
	doi = {10.1007/s00161-015-0478-6},
	journal = {Continuum Mechanics and Thermodynamics},
	number = {5},
	pages = {1347--1359},
	title = {The weak compatibility equations of nonlinear elasticity and the insufficiency of the {H}adamard jump condition for non-simply connected bodies},
	volume = {28},
	year = {2016}}

@inbook{Yavari2020,
	address = {Cham},
	author = {Yavari, Arash},
	booktitle = {Geometric Continuum Mechanics},
	doi = {10.1007/978-3-030-42683-5_3},
	editor = {Segev, Reuven and Epstein, Marcelo},
	isbn = {978-3-030-42683-5},
	pages = {143--183},
	publisher = {Springer International Publishing},
	title = {Applications of {A}lgebraic {T}opology in {E}lasticity},
	year = {2020}}

@article{ACGK,
	author = {C. Amrouche and P. G. Ciarlet and L. Gratie and S. Kesavan},
	journal = {C. R. Acad. Sci. Paris, Ser. I},
	pages = {887--891},
	title = {On {S}aint {V}enant's compatibility conditions and {P}oincar\'{e}'s lemma},
	volume = {342},
	year = {2006},
    DOI = {10.1016/j.crma.2006.03.026}}

@article{CARBONARA7,
	author = {Walter Lacarbonara and Achille Paolone},
	doi = {https://doi.org/10.1016/j.cam.2006.08.008},
	issn = {0377-0427},
	journal = {Journal of Computational and Applied Mathematics},
	number = {1},
	pages = {473-497},
	title = {On solution strategies to {S}aint-{V}enant problem},
	volume = {206},
	year = {2007}}

@article{Acharya99a,
	author = {Acharya, Amit},
	date = {1999/08/01},
	doi = {10.1023/A:1007653400249},
	id = {Acharya1999},
	isbn = {1573-2681},
	journal = {Journal of Elasticity},
	number = {2},
	pages = {95--105},
	title = {On {C}ompatibility {C}onditions for the {L}eft {C}auchy--{G}reen {D}eformation {F}ield in {T}hree {D}imensions},
	volume = {56},
	year = {1999}}

@article{Yavari2013,
	author = {Yavari, Arash},
	doi = {10.1007/s00205-013-0621-0},
	fjournal = {Archive for Rational Mechanics and Analysis},
	issn = {0003-9527,1432-0673},
	journal = {Arch. Ration. Mech. Anal.},
	mrclass = {74B05 (74B20)},
	mrnumber = {3054603},
	mrreviewer = {Ioan\ Bucataru},
	number = {1},
	pages = {237--253},
	title = {Compatibility equations of nonlinear elasticity for non-simply-connected bodies},
	volume = {209},
	year = {2013}}

@article{Michell,
	author = {Michell, J. H.},
	doi = {10.1112/plms/s1-31.1.100},
	issn = {0024-6115},
	journal = {Proceedings of the London Mathematical Society},
	month = {04},
	number = {1},
	pages = {100-124},
	title = {{On the Direct Determination of Stress in an Elastic Solid, with application to the Theory of Plates}},
	volume = {s1-31},
	year = {1899}}

@article{ZA2018,
	author = {C. Zhang and A. Acharya},
	doi = {https://doi.org/10.1016/j.jmps.2018.06.020},
	issn = {0022-5096},
	journal = {Journal of the Mechanics and Physics of Solids},
	pages = {188-223},
	title = {On the relevance of generalized disclinations in defect mechanics},
	volume = {119},
	year = {2018}}

@article{HAGIHARA10,
	author = {K. Hagihara and N. Yokotani and Y. Umakoshi},
	doi = {https://doi.org/10.1016/j.intermet.2009.07.014},
	issn = {0966-9795},
	journal = {Intermetallics},
	number = {2},
	pages = {267-276},
	title = {Plastic deformation behavior of {M}g12{YZ}n with 18{R} long-period stacking ordered structure},
	volume = {18},
	year = {2010}}

@article{LI72,
	author = {J. C. M. Li},
	doi = {https://doi.org/10.1016/0039-6028(72)90251-8},
	issn = {0039-6028},
	journal = {Surface Science},
	pages = {12-26},
	title = {Disclination model of high angle grain boundaries},
	volume = {31},
	year = {1972}}

@article{Gertsman89,
	author = {V. Yu. Gertsman and A. A. Nazarov and A. E. Romanov and R. Z. Valiev and V. I. Vladimirov},
	doi = {10.1080/01418618908209841},
	journal = {Philosophical Magazine A},
	number = {5},
	pages = {1113-1118},
	publisher = {Taylor & Francis},
	title = {Disclination-structural unit model of grain boundaries},
	volume = {59},
	year = {1989}}

@article{Eshelby66,
	author = {J. D. Eshelby},
	doi = {10.1088/0508-3443/17/9/303},
	journal = {British Journal of Applied Physics},
	month = {sep},
	number = {9},
	pages = {1131--1135},
	publisher = {{IOP} Publishing},
	title = {A simple derivation of the elastic field of an edge dislocation},
	volume = {17},
	year = 1966}

@book{ciarlet97,
	author = {P. G. Ciarlet},
	doi = {https://doi.org/10.1016/S0168-2024(97)80014-8},
	issn = {0168-2024},
	publisher = {Elsevier, North-Holland Publishing Co., Amsterdam},
	series = {Studies in Mathematics and Its Applications},
	title = {Mathematical Elasticity, Volume $\textrm{II}$: Theory of Plates},
	volume = {27},
	year = {1997}}

@inproceedings{acharya15,
	address = {Cham},
	author = {Acharya, A. and Fressengeas, C.},
	booktitle = {Differential Geometry and Continuum Mechanics},
	editor = {Chen, Gui-Qiang G. and Grinfeld, Michael and Knops, R. J.},
	isbn = {978-3-319-18573-6},
	pages = {123--165},
	publisher = {Springer International Publishing},
	title = {Continuum Mechanics of the Interaction of Phase Boundaries and Dislocations in Solids},
	year = {2015},
    DOI = {10.1007/978-3-319-18573-6\_5}}

@incollection{RV92,
	address = {Amsterdam},
	author = {Romanov, A.E. and Vladimirov, V.I.},
	booktitle = {Dislocations in solids},
	editor = {Nabarro, F.R.N.},
	pages = {191},
	publisher = {North-Holland},
	title = {Disclinations in crystalline solids},
	volume = {9},
	year = {1992}}

@article{I19,
	author = {Inamura, T.},
	journal = {Acta Materialia},
	pages = {270-280},
	title = {Geometry of kink microstructure analysed by rank-1 connection},
	volume = {173},
	year = {2019},
    doi = {10.1016/j.actamat.2019.05.023}}

@book{B,
	author = {Bhattacharya, K.},
	publisher = {Oxford University Press},
	title = {Microstructure of martensite: why it forms and how it gives rise to the shape-memory effect},
	year = {2003}}

@article{V07,
	author = {Volterra, V.},
	journal = {Annales scientifiques de l'{\'E}cole Normale Sup{\'e}rieure},
	pages = {401-517},
	title = {Sur l'{\'e}quilibre des corps {\'e}lastiques multiplement connexes},
	volume = {24},
	year = {1907}}

@article{CermelliLeoni06,
	author = {Cermelli, P. and Leoni, G.},
	journal = {SIAM Journal on Mathematical Analysis},
	number = {4},
	pages = {1131--1160},
	publisher = {[Philadelphia] Society for Industrial and Applied Mathematics.},
	title = {Renormalized energy and forces on dislocations},
	volume = {37},
	year = {2005},
    DOI = {10.1137/040621636}}

@article{DeLucaGarroniPonsiglione12,
	author = {De Luca, L. and Garroni, A. and Ponsiglione, M.},
	journal = {Archive for Rational Mechanics and Analysis},
	pages = {885--910},
	title = {{$\Gamma$}-convergence analysis of systems of edge dislocations: the self energy regime},
	volume = {206(3)},
	year = {2012},
    DOI = {10.1007/s00205-012-0546-z}}

@article{FleckMullerAshbyHutchinson94,
	author = {Fleck, N. A. and Muller, G. M. and Ashby, M. F. and Hutchinson, J. W.},
	journal = {Acta Metallurgica et Materialia},
	pages = {475--487},
	title = {Strain gradient plasticity: theory and experiment},
	volume = {42(2)},
	year = {1994},
    doi = {10.1016/0956-7151(94)90502-9}}

@article{GarroniLeoniPonsiglione10,
	author = {Garroni, A. and Leoni, G. and Ponsiglione, M.},
	journal = {Journal European Mathematical Society},
	pages = {1231--1266},
	title = {Gradient theory for plasticity via homogenization of discrete dislocations},
	volume = {12(5)},
	year = {2010},
    DOI = {10.4171/JEMS/228}}

@article{Peierls40,
	author = {Peierls, R.},
	journal = {Proceedings of the Physical Society},
	number = {1},
	pages = {34--37},
	publisher = {World Scientific},
	title = {The size of a dislocation},
	volume = {52},
	year = {1940}}

@article{Ginster19_2,
	author = {Ginster, J.},
	journal = {SIAM J. Math. Anal.},
	pages = {3424--3464},
	publisher = {SIAM},
	title = {Strain-gradient plasticity as the {$\Gamma$}-limit of a nonlinear dislocation energy with mixed growth},
	volume = {51},
	year = {2019},
    DOI = {10.1137/18M1176579}}

@article{BlassMorandotti17,
	author = {Blass, T. and Morandotti, M.},
	journal = {Journal of Convex Analysis},
	number = {2},
	pages = {547--570},
	title = {Renormalized energy and {P}each-{K}{\"o}hler forces for screw dislocations with antiplane shear},
	volume = {24},
	year = {2017}}

@article{Georgoulis08,
	author = {Georgoulis, Emmanuil H. and Houston, Paul},
	doi = {10.1093/imanum/drn015},
	issn = {0272-4979},
	journal = {IMA Journal of Numerical Analysis},
	month = {07},
	number = {3},
	pages = {573-594},
	title = {Discontinuous {G}alerkin methods for the biharmonic problem},
	volume = {29},
	year = {2008}}

@inbook{Brenner2012,
	address = {Berlin, Heidelberg},
	author = {Brenner, Susanne C.},
	booktitle = {Frontiers in Numerical Analysis - Durham 2010},
	doi = {10.1007/978-3-642-23914-4_2},
	editor = {Blowey, James and Jensen, Max},
	isbn = {978-3-642-23914-4},
	pages = {79--147},
	publisher = {Springer Berlin Heidelberg},
	title = {$C^0$ {I}nterior {P}enalty {M}ethods},
	year = {2012}}

@article{Arnold2002,
	author = {Arnold, Douglas N. and Brezzi, Franco and Cockburn, Bernardo and Marini, L. Donatella},
	doi = {10.1137/S0036142901384162},
	journal = {SIAM Journal on Numerical Analysis},
	number = {5},
	pages = {1749-1779},
	title = {Unified {A}nalysis of {D}iscontinuous {G}alerkin {M}ethods for {E}lliptic {P}roblems},
	volume = {39},
	year = {2002}}

@article{Arnold82,
	author = {Arnold, Douglas N.},
	doi = {10.1137/0719052},
	journal = {SIAM Journal on Numerical Analysis},
	number = {4},
	pages = {742-760},
	title = {An {I}nterior {P}enalty {F}inite {E}lement {M}ethod with {D}iscontinuous {E}lements},
	volume = {19},
	year = {1982}}

@article{CFM2025,
	author = {Pierluigi Cesana and Edoardo Fabbrini and Marco Morandotti},
	doi = {10.1007/s10659-025-10161-5},
	issn = {1573-2681},
	journal = {Journal of Elasticity},
	number = {4},
	pages = {71},
	title = {Variational Formulation of {P}lanar {L}inearized {E}lasticity with {I}ncompatible {K}inematics},
	volume = {157},
	year = {2025}}

@article{AINSWORTH2024,
	author = {Mark Ainsworth and Charles Parker},
	doi = {https://doi.org/10.1016/j.cma.2024.117267},
	issn = {0045-7825},
	journal = {Computer Methods in Applied Mechanics and Engineering},
	pages = {117267},
	title = {Two and three dimensional ${H}^2$-conforming finite element approximations without ${C}^1$-elements},
	volume = {431},
	year = {2024}}

@article{FARRELL2022,
	author = {Patrick E. Farrell and Abdalaziz Hamdan and Scott P. MacLachlan},
	doi = {https://doi.org/10.1016/j.camwa.2022.10.024},
	issn = {0898-1221},
	journal = {Computers \& Mathematics with Applications},
	pages = {300-319},
	title = {A new mixed finite-element method for {$H^2$} elliptic problems},
	volume = {128},
	year = {2022}}

@article{BALASUNDARAM1984,
	author = {S. Balasundaram and P.K. Bhattacharyya},
	doi = {https://doi.org/10.1016/0898-1221(84)90052-X},
	issn = {0898-1221},
	journal = {Computers \& Mathematics with Applications},
	number = {3},
	pages = {245-256},
	title = {A mixed finite element method for fourth order elliptic equations with variable coefficients},
	volume = {10},
	year = {1984}}

@article{PAPANICOLOPULOS2013,
	author = {S.-A. Papanicolopulos and A. Zervos},
	doi = {https://doi.org/10.1016/j.compstruc.2012.07.003},
	issn = {0045-7949},
	journal = {Computers \& Structures},
	note = {Special Issue: UK Association for Computational Mechanics in Engineering},
	pages = {53-58},
	title = {Polynomial ${C}^1$ shape functions on the triangle},
	volume = {118},
	year = {2013}}

@article{PAPANICOLOPULOS2012,
	author = {Papanicolopulos, S.-A. and Zervos, A.},
	doi = {https://doi.org/10.1002/nme.3296},
	journal = {International Journal for Numerical Methods in Engineering},
	number = {11},
	pages = {1437-1450},
	title = {A method for creating a class of triangular ${C}^1$ finite elements},
	volume = {89},
	year = {2012}}
\end{document}